\newmdtheoremenv[style=mystyle]{btheorem}[theorem]{Theorem}
\newmdtheoremenv[style=mystyle]{blemma}[theorem]{Lemma}
\newmdtheoremenv[style=mystyle]{bcorollary}[theorem]{Corollary}
\newmdtheoremenv[style=mystyle,backgroundcolor=white]{bassumption}[theorem]{Assumption}
\newmdtheoremenv[style=mystyle,backgroundcolor=white]{bhypothesis}[theorem]{Hypothesis}
\definecolor{softblue}{rgb}{0.90,0.92,1.00}
\newcommand{\dom}[1]{\mathop{\mathrm{dom}} #1}
\newtheorem{example}[theorem]{Example}
\newcommand{\mgf}{\gamma}
\newcommand{\Ex}{\mathop{\bf E\/}}
\newcommand{\Soln}{\ensuremath{\Sscr}\xspace}
\newcommand{\error}[1]{\smash{\norm{e_{#1}}^2}}
\newcommand{\bmgf}{\bar\gamma} \newcommand{\calG}{\mathcal{G}}
\newcommand{\ind}[1]{\mathds{1}_{#1}} \renewcommand{\mid}{\,|\,}
\DeclareMathOperator{\prox}{\rm prox}
\title{Tail bounds for stochastic approximation} \author{%
  Michael P.~Friedlander%
  \thanks{Department of Computer Science, University of British
    Columbia, Vancouver, BC V6T 1Z4, Canada ({\{\tt
      mpf,ggoh\}@cs.ubc.ca}). The work of these authors was supported
    by NSERC Discovery Grant 312104 and by NSERC CRD Grant
    375142.\hfill January 7, 2014}%
  \and Gabriel Goh$^*$} \pagestyle{myheadings}
\begin{document}
\maketitle

\begin{abstract}
  Stochastic-approximation gradient methods are attractive for
  large-scale convex optimization because they offer inexpensive
  iterations. They are especially popular in data-fitting and
  machine-learning applications where the data arrives in a continuous
  stream, or it is necessary to minimize large sums of functions. It
  is known that by appropriately decreasing the variance of the error
  at each iteration, the expected rate of convergence matches that of
  the underlying deterministic gradient method. Conditions are given
  under which this happens with overwhelming probability.
\end{abstract}
\begin{keywords}
  stochastic approximation, sample-average approximation, incremental
  gradient, steepest descent, convex optimization
\end{keywords}
\begin{AMS}
  90C15, 90C25
\end{AMS}
\thispagestyle{plain}

\section{Introduction}
Stochastic-approximation methods for convex optimization are prized
for their inexpensive iterations and applicability to large-scale
problems. The convergence analyses for these methods typically rely on
expectation-based metrics for gauging progress towards a solution. But
because the solution path is itself stochastic,
practitioners---especially those relying on ad-hoc applications of
such algorithms for a limited number of iterations---may pause and
question how far an observed solution path is from the optimal
value. Our aim is to develop bounds on the probability of deviating
too far from the deterministic solution path. This result complements
existing expectation-based analyses, and can furnish useful guidance
for practitioners.

Let $f:\Real^{n}\to\Real$ be a differentiable function with a
Lipschitz-continuous gradient, and $g:\Real^{n}\to(-\infty,+\infty]$
be a convex lower-semicontinuous~\cite[\S7]{Roc70} function. Consider
the optimization problem
\begin{equation} \label{eq:7}
  \minimize{x\in\Real^{n}} \quad h(x) := f(x) + g(x).
\end{equation}
This formulation permits us to capture a wide variety of problems,
including convex constraints (by letting $g$ represent the indicator
function over that set) and nonsmooth regularizers.  We are interested
in the probabilistic guarantees of the approximate proximal-gradient
iteration
\begin{equation}
  \label{eq:27}
  x\kp1 = \prox_{\alpha\k}\!\!\set{x\k - \alpha\k (\nabla f(x\k) + e\k)},
\end{equation}
where  $\alpha\k$ is a 
positive step length, $e\k$ is a random variable, and
\[
  \prox_{\alpha}(z) := \argmin_{y}\set{\alpha g(y) +
    \tfrac{1}{2}\|x-y\|_{2}^{2}}
\]
is the proximal operator~\cite{ComWaj2005}.  The gradient residual
$e\k$ is meant to account for error that might be incurred in the
computation of the gradient $\nabla f(x\k)$.  Such situations may
arise, for example, if evaluating the exact gradient requires a costly
simulation, or traversing a large data set. This iteration reduces to
the classical steepest-descent method when $g\equiv0$ and
$e\k\equiv0$.

An application of this framework is to provide tail bounds for solving
stochastic optimization problems, such as when
\begin{equation*}
f(x) = \Ex F(x,Z),
\end{equation*}
where $Z$ is a random variable. Stochastic-approximation algorithms
generally proceed by generating at each iteration $k$ a random sample
$\{Z_{1},\dots,Z_{m_{k}}\}$ of size $m_{k}$, which is used to compute
the search direction
\begin{equation}
  \label{eq:28A}
  \nabla f(x_{k}) + e_{k}
  = \frac{1}{m_{k}}\sum_{i=1}^{m_{k}}\nabla_{x} F(x_{k},Z_{i}).
\end{equation}
When $Z$ takes on a finite number of values with uniform probability,
then $f$ is equivalent to the familiar case of sums of functions
\begin{equation} \label{eq:29}
f(x)=\frac{1}{M}\sum_{i=1}^{M}f_{i}(x),
\end{equation}
and then~\eqref{eq:28A} reduces to
\begin{equation}\label{eq:28}
\nabla f(x_{k}) + e_{k} =\frac{1}{m_{k}}\sum_{i\in\mathcal{S}_{k}}\nabla f_{i}(x_{k}) ,
\end{equation}
where the random sample $\mathcal{S}_{k}\subseteq \{ {1,\dots,M} \}$
is possibly chosen without replacement.

At one extreme is a fixed sample size $m\k$ (equal to 1, say), which
yields an inexpensive iteration but generally does not converge to a
minimizer unless $\alpha\k\to0$; at best it converges sublinearly to
the solution. At the other extreme is the deterministic
proximal-gradient method, which under certain conditions (described in
\S\ref{se:existing convergence}) converges linearly, i.e., for all
iterations $k$, there exists a constant $\rho<1$ such that
\begin{equation}\label{eq:3}
  \pi\k \le \rho^{k}\pi_{0},
\end{equation}
where $$\pi_{k}:=h(x_{k}) - \inf h(x)$$ is the gap between the current
and optimal values of the function.  As do Friedlander and
Schmidt~\cite{FriS:2011}, Byrd, Chin, Nocedal, and
Wu~\cite{Byrd:2012}, and So~\cite{So:2013}, we consider methods that
interpolate between these extremes by increasing the sample size at a
linear rate. As argued by Byrd et al.~\cite[see Table~1]{Byrd:2012},
the increasing-sample size strategy has a better complexity rate, in 
terms of total gradients sampled, than if the sample size is held fixed.

Our aim in this paper is to bound the probability that the rate of
convergence of the stochastic method deviates from 
linear-convergence rate, i.e., we provide tail bounds on
\begin{equation}
  \label{eq:30}
  \Pr\,(\pi_k-\rho^{k}\pi_{0}\ge \epsilon).
\end{equation}
It is straightforward to recast these results to obtain tail bounds on
$\Pr(\pi\k>\epsilon)$.  In \S\ref{sec:gradient-with-error} we describe
bounds that depend on the errors generically, and in
\S\ref{sec:tail-bounds-sagd} apply these results to obtain
exponentially decaying tail bounds in the case where the errors
decrease linearly. In \S\ref{sa} these results are further specialized
to the case where $f$ is given by~\eqref{eq:28A} and~\eqref{eq:28},
and exponential tail bounds are derived that depend on the sample
size.

\subsection{Assumptions and notation}
\label{sec:assumptions}
We make the following blanket assumptions throughout. The solution set
\Soln of \eqref{eq:7} is nonempty. For all $x$ and $y$, there exist 
positive constants $L$ and $\tau\geq 1$ such that
\begin{subequations}
\begin{align}
  \norm{\nabla f(y)-\nabla f(x)} 
  &\le L\,\norm{y-x}, 
  \label{a:lipschitz}
  \\ \min_{\xbar\in\Soln}\,\norm{x-\xbar}
  &\leq\tau\,\norm{x-\prox_{1/L}\set{x-\tfrac1L\nabla f(x)}}.
   \label{a:natural-res}
\end{align}
\end{subequations}
Except for the discussion in section~\ref{se:existing convergence},
$\alpha_{k}\equiv 1/L$.  Assumption~\eqref{a:lipschitz} asserts the
Lipschitz continuity of the gradient of
$f$. Assumption~\eqref{a:natural-res} is a global error bound on the
distance from $x$ to the solution set in terms of the residual in the
optimality conditions. Local versions of this error bound are
described by Tseng and Yun~\cite{Tsy09a} and Luo and
Tseng~\cite{LuT94}; the bound that we use here is a global version
described by So~\cite{So:2013}. This assumption is less restrictive
than strong convexity; in particular, if $g\equiv0$ and $f$ is
strongly convex with parameter $\mu$, \eqref{a:natural-res} holds with
$\tau = L/\mu$. Moreover, this assumption holds whenever $g$ is
polyhedral and one of the following holds for the function $f$: it is
convex and quadratic; or $f(x) = q(Ex) + c\T x$ for any matrix $E$,
vector $c$, and strongly convex function $q$; or $f(x)=\max_{\gamma\in
  Y}\,\set{\innerp{Ex}{y}-q(y)}$ for any strongly convex function $q$
that has a Lipschitz-continuous gradient. This is described in
Theorem~4 of Luo and Tseng~\cite{LuT94}, and Proposition~4 of
So~\cite{So:2013}.

Let $R\k:=\sum_{i=0}^{k-1}\rho^i$, where $\rho<1$ is a constant
specified in Lemma~\ref{lem:obj-val-bnd} in terms of $L$ and
$\tau$. Let $\Fscr\k=\sigma(e_1,e_2,\ldots,e\k)$ be the
$\sigma$-algebra generated by the sequence of errors $e_{i}$. When the context is clear, $[z]_{i}$ denotes the $i$th component of a
vector~$z$.

\subsection{Existing convergence analysis}
\label{se:existing convergence}

In general, if $\lim\inf_{k} \|e_{k}\| \neq 0$, then we necessarily
require $\alpha\k\to0$ in~\eqref{eq:27} in order to ensure optimality
of limit points. Combettes and Wajs~\cite[Theorem~3.4]{ComWaj2005}
show that the iteration~\eqref{eq:27} converges to a solution when
$0<\inf\alpha\k<\sup\alpha\k<2/L$ and
$\sum_{k\in\Natural}\norm{e\k}<\infty$, and also consider other kinds
of perturbations that enter into the iteration; no convergence rates
are given.  Schmidt, Roux, and Bach~\cite{SchmidtRouxBach:2011} link
the convergence rate of the iterations (including accelerated
variants) to $\Ex \|e_{k}\|^{2}$, which measures the variance in the
error, and to error in the proximal-map computation. In the case in
which $e\k=0$ has zero mean and finite variance, it is known that the
proximal-gradient method converges as $\BigOh(1/\sqrt k)$; see, e.g.,
Lanford, Li, and
Zhang~\cite{Langford:2009:SOL:1577069.1577097}. Contrast these rates
to those that can be obtained when $e\k\equiv0$, and in that case the
method has a rate of $\BigOh(1/k)$, and its accelerated variant has a
rate of $\BigOh(1/k^2)$, which is an optimal rate; see, e.g.,
Nesterov~\cite{Nes07} and Beck and Teboulle~\cite{BeT08}.

For the case where $g\equiv0$ (and hence the proximal operator
in~\eqref{eq:27} is simply the identity map), has been extensively
studied. It is well known that if $f$ is strongly convex,
deterministic steepest descent without error (i.e., $e\k\equiv0$) and
with a constant stepsize $\alpha\k = 1/L$ converges linearly with a
rate constant $\rho<1$ that depends on the condition number of $f$;
see~\cite[section~8.6]{luenberger2008linear}. Bertsekas and
Tsitsiklis~\cite{BT:2000} describe conditions for convergence of the
iteration~\eqref{eq:27} when the steplengths $\alpha\k$ satisfy the
conditions $\sum_{k\in\Natural} \alpha\k = \infty$ and
$\sum_{k\in\Natural} \alpha\k^2 < \infty$.  Bertsekas and
Nedi\'c~\cite{NedBer:2000} show that randomized incremental-gradient
methods for~\eqref{eq:29}, with constant steplength
$\alpha\k\equiv\alpha$, converge as 
\[ \Ex\pi\k \le \BigOh(1)(\rho^k +
\alpha) \] 
where $\BigOh(1)$ is a positive constant. This expression is
telling  because the first term on the right-hand side decreases at a
linear rate, and depends on the condition number through $\rho$; this
term is also present for any deterministic first-order method with
constant stepsize. For a constant stepsize, Friedlander and
Schmidt~\cite{FriS:2011} give non-asymptotic rates that directly
depend on the rate at which the error goes to zero, and for the case
where $f$ is given by~\eqref{eq:29}, they further showthe dependence
of the convergence rate on the sample size.

For a non-vanishing stepsize, i.e., $\liminf_{k}\alpha\k>0$, Luo and
Tseng~\cite{luo1993error} show that for a decreasing error sequence
that satisfies $\|e_{k}\| = \BigOh ( \|x_{k+1}-x_{k}\|)$, the function
values converge to the optimal value at an asymptotic linear rate.

The convergence in probability of the stochastic-approximation method
was first discussed by the classic 
Robbins~\cite{robbins1951stochastic} paper. Bertsekas and
Tsitsiklis~\cite{BT:2000} give mild conditions on $e_{k}$ and $f$
under which $f(x_{k}) \to \inf f(x)$ in probability. More recently,
Nemirovski, Juditsky, Lan, and Shapiro~\cite{nemirovski2009robust}
show that for decreasing steplengths $\alpha\k = \BigOh(1/k)$, these
methods achieve a sublinear rate according to $\Ex\pi\k =
\BigOh(1/k)$; the iteration average has similar convergence
properties, and it converges sublinearly with overwhelming
probability.

\section{Proximal point with error}

Our point of departure is the following result, which relates the
progress in the objective value to the norm of the gradient
residual. 

\begin{blemma}
  \label{lem:obj-val-bnd}
  After $k$ iterations of algorithm~\eqref{eq:27},
  \begin{equation*}
    \pi\k - \rho^k\pi_0 \le \frac{1}{\vartheta}
    \sum_{i=0}^{k-1}\rho^{k-1-i}\error{i},
  \end{equation*}
  where
  $\rho = (40\tau^{2})/(1+40\tau^{2})\in(0,1)$
  and
  $\vartheta = L\, (1+40\tau^{2})/(40\tau^{2})>0.$
\end{blemma}

The proof of this result is laid out in
Appendix~\ref{app:proof-of-obj-val-bnd}, and follows the template laid
out by Luo and Tseng \cite[Theorem~3.1]{luo1993error}, modified to
keep the error term $e\k$ explicit. So~\cite{So:2013} also provides a
similar derivation for the case where $g\equiv0$, in which case it
seems possible to obtain tighter constants $\rho$ and $\vartheta$.  If
additionally $\|e_{k}\|=0$, then the result reduces to the well-known
fact that steepest descent decreases the objective value linearly. We
note that the constants are invariant to scalings of $h$.

\begin{example}[Gradient descent with independent Gaussian noise, part
  I] \label{ex:expected-gaussian-noise} Let $e\k\sim N(0,\sigma^2
  I)$. Because $\error{k}$ is a sum of $n$ independent Gaussians, it
  follows a chi-squared distribution with mean
  $\Ex\error{k}=n\sigma^2$. Therefore,
  \begin{equation}
    \label{eq:4}
    \Ex\pi\k-\rho^k\pi_0
    \le \frac{1}{\vartheta} \sum_{i=0}^{k-1}\rho^{k-1-i}\Ex\error{i}
    = \frac{n\sigma^2}{\vartheta} \sum_{i=0}^{k-1}\rho^{k-1-i}.
  \end{equation}
  Take the limit inferior of both sides of~\eqref{eq:4}, and note that
  $\lim_{k\to\infty}\sum_{i=0}^{k-1}\rho^{k-1-i}=1/(1-\rho) $.  Use
  the values of the constants in Lemma~\ref{lem:obj-val-bnd} to obtain
  the bound
  \[
  \Ex\liminf_{k\to\infty}\pi\k
  \le\liminf_{k\to\infty}\Ex\pi\k
  \le(20\tau^2/L)\,n\sigma^2,
  \]
  where the first inequality follows from the application of Fatou's
  Lemma~\cite[Ch.~4]{RoydenF:2010}.  Hence, even though
  $\lim_{k\to\infty}\pi\k$ may not exist, we can still provide a lower
  bound on the distance to optimality that is proportional to the
  variance of the error term.
\end{example}

The following result establishes sufficient conditions under which the
distance to the solution $\pi\k$ exhibits a supermartingale
property. The dependence on the $\sigma$-algebra $\Fscr_{k-1}$ is
effectively a conditioning on the history of the algorithm.

\begin{btheorem}
  \label{thm:supermartingale} (Supermartingale Property).
  Let $\xbar\k$ be the projection of $x\k$ onto the solution set
  $\Soln$. For algorithm~\eqref{eq:27},
\[
\Ex [ \pi_{k+1} \mid \Fscr_{k-1}] \leq \pi_k 
\textt{if}
\Ex [ \|e_{k}\|^{2} \mid \Fscr_{k-1}]
\leq 1/(10 \tau^{2})\|x_{k}-\bar{x}_{k}\|^{2}.
\]
\end{btheorem}

\begin{proof} Lemma~\ref{lem:sufficient_decrease} gives a sufficient
  condition for the monotonicity of the iteration. Using that as a
  starting point yields
\begin{align*}
\pi_{k+1} & \leq\pi_{k}+\frac{1}{L}\|e_{k}\|^{2}-\frac{L}{4}\|x_{k}-x_{k+1}\|^{2}\\& 
 \overset{(i)}{\leq}\pi_{k}+\frac{1}{L}
  \|e_{k}\|^{2}-\frac{L}{4}
 \left(\frac{1}{2\tau^{2}}
  \|x_{k}-\bar{x}_{k}\|^{2}-\frac{5}{8L^{2}}\|e_{k}\|^{2}\right)\\& 
 \leq\pi_{k}+\frac{27}{32L}\|e_{k}\|^{2}-
  \frac{L}{8\tau^{2}}\|x_{k}-\bar{x}_{k}\|^{2},
\end{align*}
where $(i)$ comes from Lemma~\ref{lem:fourbounds}b. Taking
conditional expectations on both sides:
\begin{align*}
\Ex[\pi_{k+1} \mid \Fscr_{k-1}] & \leq 
  \Ex \Big[ \pi_{k}+\frac{27}{32L}\|e_{k}\|^{2}-\frac{1}{8\tau^{2}L}\|x_{k}-\bar{x}_{k}\|^{2} \mid \Fscr_{k-1} \Big] \\&
  \leq \pi_{k}+ \frac{27}{32L}\Ex [\|e_{k}\|^{2} \mid \Fscr_{k-1}] -\frac{1}{8\tau^{2}L}\|x_{k}-\bar{x}_{k}\|^{2} \leq \pi\k.
\end{align*}
\end{proof}

\section{Probabilistic bounds for gradient descent with random error}
\label{sec:gradient-with-error}

An immediate consequence of Lemma~\ref{lem:obj-val-bnd} is a tail
bound via Markov's inequality:
\[
\Pr(\pi\k-\rho^k\pi_{0} \ge \epsilon) \leq
\Pr \Big( \frac{1}{\vartheta} \sum_{i=0}^{k-1}\rho^{k-1-i}\error{i} \ge
\epsilon \Big)
\le  \frac{1}{\vartheta\epsilon}\sum_{i=0}^{k-1}\rho^{k-1-i}\Ex\error{i}.
\]
This inequality is too weak, however, to say anything meaningful about
the confidence in our solution after a finite number of iterations. We
are instead interested in Chernoff-type bounds that are exponentially
decreasing in $\epsilon$, and in the parameters that control the size
of the error.

The first bound (section~\ref{sec:gener-error-sequ}) that we develop
makes no assumption on the relation of the gradient errors between
iterations, i.e., the error sequence may or may not be history
dependent, and we thus refer to this as a generic error sequence. The
second bound (section~\ref{ubes}) makes the stronger assumption about
the relationship of the errors between iterations.

\subsection{Generic error sequence}\label{sec:gener-error-sequ}

Our first exponential tail bounds are defined in terms of the
moment-generating function
\begin{equation*}
\mgf\k(\theta) := \Ex\exp(\theta\error{k})
\end{equation*}
of the error norms $\error{k}$. We make the convention that
$\gamma\k(\theta)=+\infty$ for $\theta\notin\dom\gamma\k$.

\begin{btheorem}[Tail bound for generic errors]
  \label{th: generic tail bound} 
  For algorithm~\eqref{eq:27},
  \begin{subequations}
  \begin{equation}
    \label{eq:6}
    \Pr(\pi\k - \rho^k\pi_0 \ge \epsilon)
    \le
    \inf_{\theta>0}
    \left\{
      \frac{\exp(-\theta \vartheta \epsilon/R\k)}{R\k}
      \sum_{i=0}^{k-1}\rho^{k-1-i}\mgf_i(\theta)
    \right\}.
  \end{equation}
  If $\gamma\k\equiv\gamma$ for all $k$ (i.e., the error norms
  $\error{k}$ are identically distributed), then the bound simplifies
  to
  \begin{equation}
    \label{eq:6-simple}
    \Pr(\pi\k - \rho^k\pi_0 \ge \epsilon)
    \le
    \inf_{\theta>0}
    \left\{
    	\exp(-\theta \vartheta \epsilon/ R\k) 
      \mgf(\theta)
    \right\}.
  \end{equation}
  \end{subequations}
\end{btheorem}
\begin{proof}
  By the definition of $R_{k}$, $\left(\sum_{i=0}^{k-1}
    \rho^{k-1-i}\right)/{R\k}=1$. Thus, for $\theta>0$,
  \begin{align*}
    \Ex
        \exp\left(
              \theta\sum_{i=0}^{k-1}\rho^{k-1-i}\error{i}
            \right)
     &=
    \Ex
        \exp\left(
              \sum_{i=0}^{k-1}
              \frac{\rho^{k-1-i}}{R\k}\theta R\k\error{i}
            \right)
\\  &\overset{(i)}\le
    \Ex
        \sum_{i=0}^{k-1}\frac{\rho^{k-1-i}}{R\k}
        \exp(\theta R\k\error{i})
\\  &\overset{(ii)}=
    \frac{1}{R\k}\sum_{i=0}^{k-1}\rho^{k-1-i}\mgf_i(\theta R\k),
  \end{align*}
  where $(i)$ follows from the convexity of $\exp(\cdot)$, and $(ii)$
  follows from the linearity of the expectation operator and the
  definition of $\mgf_i$. Together with Markov's inequality, the above
  implies that for all $\theta>0$,
  \begin{align}
    \Pr\left(\sum_{i=0}^{k-1}\rho^{k-1-i}\error{i}\ge\epsilon \right)
    &=
    \Pr\left(
      \exp
      \left[
        \theta \sum_{i=0}^{k-1}\rho^{k-1-i}\error{i}
      \right]
      \ge
      \exp(\theta\epsilon)
    \right) \nonumber
\\ &\le
   \exp(-\theta\epsilon)
   \Ex\exp\left(
     \theta \sum_{i=0}^{k-1}\rho^{k-1-i}\error{i}
   \right) \nonumber
\\ &\le
   \frac{\exp(-\theta\epsilon)}{R\k}
   \sum_{i=0}^{k-1}\rho^{k-1-i}\mgf_i(\theta R\k).
   \label{eq:markov-application-sum}
  \end{align}
  This inequality, together with Lemma~\ref{lem:obj-val-bnd}, implies
  that for all $\theta>0$,
  \begin{align*}
    \Pr\left(\pi\k-\rho^k\pi_0\ge\epsilon\right)
    &\le
    \Pr
    \left(
      \frac1{\vartheta}\sum_{i=0}^{k-1}\rho^{k-1-i}\error{i}
      \ge \vartheta\epsilon
    \right) 
\\  &\le
    \frac{\exp(-\theta \vartheta\epsilon)}{R\k}
    \sum_{i=0}^{k-1}\rho^{k-1-i}\mgf_i(\theta R\k),
  \end{align*}
  where we use the elementary fact that
  $\Pr(X\ge\epsilon)\le\Pr(Y\ge\epsilon)$ if $X\le Y$ almost
  surely. Redefine $\theta$ as $\theta R\k$, and take the infimum
  of the right-hand side over $\theta>0$, which gives the required
  inequality~\eqref{eq:6}. The simplified bound~\eqref{eq:6-simple}
  follows directly from the definition of $R\k$.
\end{proof}

When the errors are identically distributed, there is an intriguing
connection between the tail bounds described in Theorem~\ref{th:
  generic tail bound} and the convex conjugate of the
cumulant-generating function of that distribution, i.e., $(\log\circ
\,\gamma)^{*}$.

\begin{bcorollary}[Tail bound for identically-distributed errors]
  \label{co:simple-log}
  Suppose that the error norms $\error{k}$ are identically
  distributed. Then for algorithm~\eqref{eq:27},
  \begin{equation*}
    \log\Pr(\pi\k - \rho^k\pi_0 \ge \epsilon)
    \le
    -\left[\log\mgf(\cdot)\right]^*(\vartheta\epsilon/R\k).
  \end{equation*}
\end{bcorollary}
\begin{proof}
  Take the log of both sides of~\eqref{eq:6-simple} to get
  \begin{align*}
    \log\Pr(\pi\k - \rho^k\pi_0 \ge \epsilon)
     &\le
     \log\inf_{\theta>0}
     \left\{
       \exp(-\theta \vartheta\epsilon/R\k)\, \mgf(\theta)
     \right\}
   \\&=-\sup_{\theta>0}
   \left\{
     (\vartheta\epsilon/R\k)\theta - \log\mgf(\theta)
   \right\},
  \end{align*}
  which we recognize as the negative of the conjugate of
  $\log\circ\,\mgf$ evaluated at $\vartheta\epsilon/R\k$.
\end{proof}

Note that these bounds are invariant with regard to scaling, in
the sense that if the objective function $f$ is scaled by some
$\alpha>0$, then the bounds hold for $\alpha\epsilon$.

The following example illustrates an application of this tail bound to
the case in which the errors follow a simple distribution with a known
moment-generating function.

\begin{example}[Gradient descent with independent Gaussian noise,
  part~II]
  \label{ex:expected-gaussian-noise-2}
  As in Example~\ref{ex:expected-gaussian-noise}, let
  $e\k\sim N(0,\sigma^2 I)$. Then $\error{k}$ is a scaled chi-squared
  distribution with moment-generating function
  \[
  \mgf\k(\theta) = (1-2\sigma^2 \theta)^{-n/2},
  \quad
  \theta \in \left[0,\frac1{2\sigma^2}\right).
  \]
  Note that
  \[
  [\log\mgf(\cdot)]^*(\mu)
  = \frac{\mu-n\sigma^{2}}{2\sigma^{2}}+\frac{n}{2}\log(n\sigma^{2}/\mu)
  \text{for}
  \mu > n\sigma^{2}.
  \]
  We can then apply Corollary~\ref{co:simple-log} to this case to
  deduce the bound
  \[
  \Pr(\pi\k-\rho^k\pi_0\ge \epsilon)
  \le
  \left(\frac{\exp(1)}{n}\cdot\frac{\vartheta\epsilon}{\sigma^{2} R\k}\right)^{n/2}\!\!
  \exp\left(-\frac{\vartheta\epsilon}{2 \sigma^{2} R\k}\right)
  \text{for}
  \epsilon>\frac{n\sigma^{2} R_k}{\vartheta}.
  \]
  The bound can be further simplified by introducing an additional
  perturbation $\delta>0$ that increases the base of the exponent:
  \begin{equation}\label{eq:13}
  \Pr(\pi\k-\rho^k\pi_0\ge \epsilon) =
  \Oscr\left[\exp\left(-\delta\frac{\vartheta\epsilon}{2 \sigma^{2}
        R\k}\right)\right] \text{for all} \mbox{$\delta\in[0,1)$},
  \end{equation}
  which highlights the exponential decrease of the bound in terms of
  $\epsilon$.

\end{example}

\subsection{Unconditionally bounded error sequence} \label{ubes}

In contrast to the previous section, we now assume that there exists a
deterministic bound on the conditional expectation
$\Ex\left[\exp(\theta\error{k})\mid\Fscr\km1\right]$. We say that this
bound holds unconditionally because it holds irrespective of the
history of the error sequence.

\begin{bassumption} \label{as:unconditional errors} Assume that
  $\Ex\left[\exp(\theta\error{k})\mid\Fscr\km1\right]$ is finite over
  $[0,\sigma)$, for some $\sigma>0$. Therefore there exists, for each
  $k$, a deterministic function
  $\bmgf\k:\Real_{+}\to\Real_{+}\cup\{\infty\}$ such that
  \[
  \bmgf\k(0)=1
  \text{and}
  \Ex\left[\exp(\theta\error{k})\mid\Fscr\km1\right]\le\bmgf\k(\theta).
  \]
  (Thus, the bound is tight at $\theta=0$.)
\end{bassumption}

The existence of such a function in fact implies a bound on the
moment-generating function of $\error{k}$. In particular,
\begin{equation} \label{eq:8}
\mgf\k(\theta) := 
\Ex \exp(\theta\error{k}) =
\Ex\left[
      \Ex\left[
        \exp(\theta\error{k})\mid\Fscr\km1
      \right]
    \right]
\le\Ex \bmgf\k(\theta) =
\bmgf\k(\theta).
\end{equation}
The converse, however, is not necessarily true. To see this, consider
the case in which the errors $e_1,\ldots,e\km1$ are independent
Bernoulli-distributed random variables, and $e\k$ is a deterministic
function of all the previous errors, e.g., $\Pr(e_i=0) = \Pr(e_i=1) =
1/2$ for $i=1,\ldots,k-1$, and the error on the last iteration is
completely determined by the previous errors:
\[
e\k =
\begin{cases}
  1 & \mbox{if $e_1 = e_2 = \cdots = e\km1$},
\\0 & \mbox{otherwise}.
\end{cases}
\]
Therefore, $\Pr(e\k=1) = (1/2)^{k-1}$ and $\Pr(e\k=0) =
1-(1/2)^{k-1}$, and the moment-generating function of $e\k$ is
$
\gamma\k(\theta) = 1-{2^{1-k}}(1+\exp\theta).
$
Then,
\[
\Ex[\exp(\theta e\k^2)\mid e_1,\ldots,e\km1] =
\begin{cases}
  \exp\theta   & \mbox{if $e_1 = e_2 = \cdots = e\km1$},
\\1            & \mbox{otherwise,}
\end{cases}
\]
whose tightest deterministic upper bound is
$\bmgf\k(\theta)=\exp\theta$. However,
$\bmgf\k(\theta)\ge\gamma\k(\theta)$ for all $\theta\ge0$.

The following result is analogous to Theorem~\ref{th: generic tail bound}.

\begin{btheorem}[Tail bounds for unconditionally bounded errors]
  \label{th:unconditional error bound}
  Suppose that Assumption~\ref{as:unconditional errors} holds. Then
  for algorithm~\eqref{eq:27},
  \begin{equation*}
    \Pr(\pi\k-\rho^k\pi_0\ge\epsilon)
    \le
    \inf_{\theta > 0}
    \left\{
      \exp(-\theta \vartheta\epsilon)
      \prod_{i=0}^{k-1}\bmgf_i(\theta\rho^{k-i-1})
    \right\}.
  \end{equation*}
\end{btheorem}
\begin{proof}
  The proof follows the same outline as many martingale-type
  inequalities \cite{azuma1967weighted,chung2006concentration}. We
  obtain the following relationships:
  \begin{align*}
    \Ex \exp\left[ \theta\sum_{i=0}^{k-1}\rho^{k-1-i}\error{i} \right]
    &\overset{(i)}{=}
    \Ex\left[
        \Ex\left[
            \exp\left.
                \left[
                 \theta\sum_{i=0}^{k-1}\rho^{k-1-i}\error{i}
                \right]
                \right|\Fscr\km2
           \right]
       \right]
\\  &=
    \Ex\left[
        \Ex\left[
            \exp\left.
                \left[
                 \theta\rho^0\error{k-1}+\theta\sum_{i=0}^{k-2}\rho^{k-1-i}\error{i}
                \right]
                \right|\Fscr\km2
           \right]
       \right]  
\\  &\overset{(ii)}{=}
    \Ex\!\left[
            \exp\left[
                 \theta\sum_{i=0}^{k-2}\rho^{k-1-i}\error{i}
                 \right]
            \Ex\left[
                  \left.
                  \exp\left(
                        \theta\error{k-1}
                       \right)
                  \right| \Fscr\km2
               \right]
       \right]
\\    &\overset{(iii)}{\leq}
      \Ex \left[
        \exp \left[
          \theta \sum_{i=0}^{k-2}\rho^{k-i-1}\error{i}
          \right]
        \right]
      \bmgf_{k-1}(\theta)
\\    &\overset{(iv)}{\leq}
      \prod_{i=0}^{k-1}\bmgf_i(\theta \rho^{k-i-1}),
\end{align*}
where $(i)$ follows from the law of total expectations, i.e.,
$\Ex_Y[\Ex[X|Y]]=\Ex[X]$; $(ii)$ follows from the observation that the
random variable $\exp(\theta\sum_{i=0}^{k-2}\rho^{k-1-i}\error{i})$ is
a deterministic function of $e_0,\ldots,e\km2$, and hence is
measurable with respect to $\Fscr\km1$ and can be factored out of the
expectation; $(iii)$ uses Assumption~\ref{as:unconditional errors};
and to obtain $(iv)$ we simply repeat the process recursively.

Thus, we now have a bound on the moment-generating function of the
discounted sum of errors
$\theta\sum_{i=0}^{k-1}\rho^{k-1-i}\error{i}$, and we can continue by
using the same approach used to
derive~\eqref{eq:markov-application-sum}. The remainder of the proof
follows that of Theorem~\ref{th: generic tail bound}, except that the
sums over $i=0,\ldots,k$ are replaced by products over that same
range.
\end{proof}

In an application where both $\mgf\k$ and $\bmgf\k$ are available, it
is not true in general that either of the bounds obtained in
Theorems~\ref{th: generic tail bound} and~\ref{th:unconditional error
  bound} are tighter than the other. When only a bound $\bmgf\k$ that
satisfies Assumption~\ref{as:unconditional errors} is available,
however, (which is the case in the sampling application of
section~\ref{sa}) we could leverage~\eqref{eq:8} and apply
Theorem~\ref{th: generic tail bound} to obtain a valid bound in terms
of $\bmgf\k$ by simply substituting it for $\mgf\k$. However, as shown
below, in this case it is better to apply
Theorem~\ref{th:unconditional error bound} because it yields a
uniformly better bound:
\begin{equation}\label{eq:9}
  \Pr\left(\pi_k-\rho^k\pi_k\geq\epsilon\right)
  \leq
  \inf_{\theta>0}
  \left\{
    \exp
    \left(
      -\theta \vartheta
      \epsilon+\sum_{i=0}^{k-1}\log\bmgf_{i}\big(\theta\rho^{k-1-i}\big)
    \right)
  \right\},
\end{equation}
while Theorem~\ref{th: generic tail bound} (with $\mgf\k$ replaced by
$\bmgf\k$) gives us
\begin{equation} \label{eq:10}
  \Pr\left(\pi\k-\rho^k\pi_{0}\geq\epsilon\right)
  \leq
  \inf_{\theta>0}
  \left\{
    \exp\left(-\theta \vartheta\epsilon
      +\log\left[
        \frac{1}{R_k}\sum_{i=0}^{k-1}\rho^{k-1-i}\bmgf_{i}(\theta R_k)
      \right]
    \right)
  \right\},
\end{equation}
where we rescale $\theta$ by $R\k$.  A direct comparison of the two
bounds show that they only differ by one term:
\[
  \log\left[
    \frac{1}{R_{k}}\sum_{i=0}^{k-1}\rho^{k-i-1}\bmgf_{i}(\theta R_k)
  \right]
  \textt{vs.}
  \sum_{i=0}^{k-1}\log\bmgf_{i}(\theta\rho^{k-1-i}).
\]
Because $R\k=\sum_{i=0}^{k}\rho^{k-i-1}$, the term in the
$\log$ on the left is a convex combination of the functions
$\bmgf_{i}$. Therefore,
\begin{align*}
  \log\left[
    \frac{1}{R_{k}}\sum_{i=0}^{k-1}\rho^{k-i-1}\bmgf_{i}(\theta R_k)
  \right]
    & \overset{(i)}{\ge}
  \sum_{i=0}^{k-1}\frac{\rho^{k-1-i}}{R_k}\log\bmgf_{i}(\theta R_k)
  \\& \overset{(ii)}{\ge}
  \sum_{i=0}^{k-1}\log\bmgf_{i}(\theta R_k\,\rho^{k-1-i}/R_k)
  \\& = 
  \sum_{i=0}^{k-1}\log\bmgf_{i}(\theta\rho^{k-1-i}),
\end{align*}
where $(i)$ is an application of Jensen's inequality and the concavity
of $\log$, and $(ii)$ follows from the convexity of the cumulant
generating function. It is then evident that~\eqref{eq:9}
implies~\eqref{eq:10}.

As with Corollary~\ref{co:simple-log}, by taking logs of both sides
above, a connection can be made between our bound and the infimal
convolution when $\gammabar$ is log-concave:
\begin{align*}
\log \Pr(\pi\k-\rho^{k}\pi_{0}\ge\epsilon) & \le 
  \left[\bigotimes_{i=0}^{k-1}\, [\log\bmgf_{i}(\,\cdot\,\rho^{k-i-1})]^{*}
  \right](\vartheta\epsilon/R_{k}),
\end{align*}
where $\otimes$ denotes the infimal convolution operator.

\begin{example}[Gradient descent with independent Gaussian noise, part
  III]\label{ex:ex-gaussian-noise-3}
  As in Example~\ref{ex:expected-gaussian-noise-2}, let $e\k\sim
  N(0,\sigma^2 I)$. Because the errors $e\k$ are independent,
  $\Ex\big[\exp(\theta\error{k})\,|\,\Fscr\km1\big]=\Ex\exp(\theta\error{k})=\mgf\k(\theta)$,
  which satisfies Assumption~\ref{as:unconditional errors} with
  $\bmgf\k(\theta):= \mgf\k(\theta)$. Apply
  Theorem~\ref{th:unconditional error bound} to obtain the bound
  \begin{align}\label{eq:12}
    \Pr \, ( 
    \pi_k-\rho^k\pi_{0}
      \geq\epsilon
      )
    &\leq
    \inf_{\theta>0}
    \left\{
      \exp(-\theta \vartheta \epsilon) \cdot 
      \prod_{i=0}^{k-1}(1-2\sigma^2\theta\rho^{k-1-i})^{-n/2}\right\}.
  \end{align}
Apply Lemma~\ref{fact:Q-Pochhammer_Lower_Bound} to obtain
\[
  \Pr\,
  (
    \pi_k-\rho^k\pi_{0}
      \geq\epsilon
  )
  \overset{}{\leq}
  \left(
    \frac{\exp(1)}{n\alpha}\cdot\frac{ \vartheta \epsilon }{\sigma^2}
  \right)
  ^{\tfrac{n\alpha}{2}} \!\!
  \exp
  \left(
    -\frac{\vartheta\epsilon}{\sigma^2}
  \right)
  \text{for} 
  \epsilon > \frac{n\alpha\sigma^2}{\vartheta},
\]
where $\alpha=1-(\log\rho)^{-1}.$ We simplify the bound to obtain
  \begin{equation}\label{eq:14}
  \Pr\,(\pi\k-\rho^k\pi_0\ge \epsilon) =
  \Oscr\left[\exp\left(- \,\delta\cdot \frac{\vartheta\epsilon}{\sigma^{2}}\right)\right] \text{for all} \mbox{$\delta\in(0,1)$};
  \end{equation}
cf.~\eqref{eq:13}.

As an aside, we note that we can easily accommodate correlated noise,
i.e., $\e\k\sim N(0,\Sigma^{2})$ where $\Sigma$ is an $n\times n$ positive
definite matrix. The error $\error{k}$ then has the
distribution of a sum of chi-squared random variables that are
weighted according to the eigenvalues $\sigma_j$ of
$\Sigma$~\cite{imhof:1961}:
  \[
  \error{k} \sim \sum_{j=1}^{n}\sigma_{j}^{2}\chi_1^2,
  \]
  and so the above tail bounds hold with $\sigma=\sigma\submax$.
\end{example}{

  The bounds obtained in Examples~\ref{ex:expected-gaussian-noise-2}
  and~\ref{ex:ex-gaussian-noise-3} illustrate the relative strengths
  of Theorems~\ref{th: generic tail bound} and~\ref{th:unconditional
    error bound}. Comparing~\eqref{eq:13} and~\eqref{eq:14}, we see
  that the asymptotic bounds only differ by a factor of
  $1/R_k$. Hence, for large $\epsilon$, the bound in
  Example~\ref{ex:expected-gaussian-noise-2} is uniformly weaker than
  the bound in Example~\ref{ex:ex-gaussian-noise-3}. Note that this
  holds despite the simplification (i.e.,
  Lemma~\ref{fact:Q-Pochhammer_Lower_Bound}) used to
  simply~\eqref{eq:12}.

\section{From tail bounds to moment-generating bounds}
\label{sec:from-tail-bounds}

Let $\mathcal{G}$ be a $\sigma$-algebra.  Consider the exponential
bound on the conditional probability~\cite[Definition
8.11]{klenke2008probability} of a sequence of univariate random
variables $X_{i}$:
\begin{equation}\label{eq:15}
  \Pr(X_{i}\geq \epsilon\mid\mathcal{G})
  :=  \Ex[\ind{X_{i}\ge\epsilon} \mid \mathcal{G}\,]
  \le \exp(-\epsilon^{2}/\nu)
  \quad\mbox{for some}\quad\nu>0.
\end{equation}
In this section we show that this bound translates into a
deterministic bound on the conditional moment-generating function
\[
\Ex [\exp(\theta\|X\|^{2}) \mid \mathcal{G}],
\]
where $X=(X_{1},X_{2},\dots,X_{n})$ is an $n$-vector.  The subsidiary
lemmas follow standard arguments (e.g., see
\cite[Chapter~2]{boucheron:2013}), except for the requirement to
condition on $\mathcal{G}$; hence, we rederive the required results.

\begin{blemma}[Bounds on moments] \label{le:bounds on moments}
  If~\eqref{eq:15} holds for some $\nu>0$, then 
  \[
  \Ex [X_{i}^{2v}\mid \mathcal{G}]\leq v!\nu^{v}
  \textt{for all}
  v=0,1,2,\ldots.
  \]
\end{blemma}
\begin{proof}
  We follow a similar argument to \cite[Theorem~2.1]{boucheron:2013}). Use the substitution $\epsilon^{2v}=\tau$ to obtain
   \[
   \Pr\big(Y^{2v}\geq \tau \mid \calG \big)\leq\exp\big({-\tau^{1/v}/\nu}\big).
   \]
  Integrate to get
   \[
   \Ex [Y^{2v}\mid\calG]
   =
   \int_{0}^{\infty}
   \!\!\Ex[
    \ind{
      Y{}^{2v}\geq \tau
    }\mid\mathcal{G}]
    \, d\tau\;\leq\;
    \int_{0}^{\infty}{\textstyle \!\!\exp
      (
        -\tau^{1/v}/\nu
      )
      \, d\tau\;=\;\Gamma(1+v)\nu^{v}\;=\; v!\nu^{v},}
   \]
   where the first equality comes from the conditional layer-cake
   representation of positive random
   variables~\cite{Swanson:2013:cond-layer-cake}.
\end{proof}

With this result, we can translate the bound~\eqref{eq:15} into a
bound on the moment-generating function of $Y^{2}$.

\begin{blemma}[Bound on conditional MGF]
  \label{lem:Relationship between exponential bound and MGF} If~\eqref{eq:15} holds for some $\nu > 0$, then
  \[
  \Ex[\exp\,(\theta Y^{2}) \mid \calG]\leq\frac{1}{1-\theta\nu}
  \textt{for} \theta \in [0,1/\nu).
  \]
\end{blemma}
\begin{proof}
  Using the Taylor expansion of $\Ex[\exp\big(\theta Y^{2}\big) \mid \calG]$,
  \begin{align*}
    \Ex[\exp\big(\theta Y^{2} \big) \mid \calG]
    & =\Ex \left[
             \left.
               \sum_{i=0}^{\infty}\theta^{i}\frac{Y^{2i}}{i!}
            \,\right|\, \calG
          \right]\\
    & \overset{(i)}{=}\sum_{i=0}^{\infty}\theta^{i}\frac{\Ex [Y^{2i}
      \mid \calG]}{i!} \\
    &
    \overset{(ii)}{\leq}\sum_{i=0}^{\infty}\theta^{i}\frac{i!\nu^{i}}{i!}
    =\sum_{i=0}^{\infty}(\theta\nu)^{i}=\frac{1}{1-\theta\nu}.
  \end{align*}
  Equality $(i)$ is obtained via the conditional monotone
  convergence theorem~\cite[Theorem~9.7e]{williams91:_probab_martin},
  which allows us to exchange limits and conditional expectations;
  inequality $(ii)$ is obtained using Lemma~\ref{le:bounds on
    moments}.
\end{proof}

We now generalize this last result to the case in which $X$ is a
random $n$-vector.

\begin{btheorem}[From tail bounds to moment-generating bounds]
  \label{le:Vector-mgf-bounds}
  Let $X$ be a random $n$-vector for which the tail
  bound~\eqref{eq:15} holds for each $i$ for some $\nu > 0$.  Then
  \[
  \Ex [ \exp(\theta\|X\|^{2}) \mid \calG]
  \leq\frac{1}{1-\theta \nu n}
  \textt{for} \theta \in [0, 1/\nu n).
  \]
\end{btheorem}

\begin{proof}
  From Lemma~\ref{lem:Relationship between exponential bound and MGF},
\begin{equation}\label{eq:17}
\Ex\left[ \exp \big( \theta n\big[X\big]_{i}^{2} \big)\mid \calG\right]
\le\
\frac{1}{1-\theta n \nu }.
\end{equation}
The following inequalities hold:
\begin{align*}
  \Ex\left[\exp\left.
      (\theta\|X\|^{2}) \right| \calG
      \right]
  &=\mathbf{E}\left[ \exp
    \left(\left.
      \theta\sum_{i=1}^{n}\big[X\big]{}_{i}^{2}
    \right)
    \right|\calG\right]
  \\
  &=\Ex\left[\exp\left.
  \left(
    \theta n \sum_{i=1}^{n}\frac{1}{n}\big[X\big]_{i}^{2}
  \right)
  \right|\calG \right]
  \\
  &\overset{(i)}{\leq}
  \Ex\left[\left.\sum_{i=1}^{n}\frac{1}{n}\exp
  \left(
    \theta n\big[X\big]_{i}^{2}
  \right)\right|\calG\right]
  \\
  &=
  \sum_{i=1}^{n}\frac{1}{n}
  \Ex\left[\exp\left.
  \left(
    \theta n\big[X\big]_{i}^{2}
  \right)\right|\calG\right]
  \overset{(ii)}{\leq}\frac{1}{1-\theta n \nu },
\end{align*}
where $(i)$ follows from Jensen's inequality and $(ii)$ follows
from~\eqref{eq:17}.
\end{proof}

\section{Convergence rates for linearly decreasing errors}
\label{sec:tail-bounds-sagd} 

Section~\ref{sec:gradient-with-error} describes tail bounds
for~\eqref{eq:30} in terms of any available bound on the
moment-generating function of the error $e\k$. A goal of this section
is to show that an exponential tail bound on the error translates to
an exponential tail bound on~\eqref{eq:30}. Thus we consider the case
where the tails on each component of $e\k$ are exponentially
decreasing (cf. Hypothesis~\ref{as:pop-bounds}.B below). We also
consider two additional conditions on the error sequence, which
illustrate the exponential tail bound's relative strength in the
following hierarchy of assumptions. In section~\ref{sa} we show how
various sampling strategies satisfy these conditions.

\begin{bhypothesis}[Uniform bounds] \label{as:pop-bounds}
  Suppose that for each $\beta\in(0,1)$,
  \begin{equation}\label{eq:31}
    U_{k} \leq \lambda\beta^{k}
  \end{equation}
  for some constant $\lambda>0$ and for all $k$. Consider the
  following hypotheses:

  \medskip
  \begin{tabular}{l@{\ }ll}
    A. &[Variance] &$\Ex \error{k} \leq U_{k}$;
    \\[6pt]
    B. &[Exponential Tail] & $
    \Pr\left([e_{k}]_{i}\geq\epsilon\mid\Fscr\km1\right)
    \leq\exp\big({-\epsilon^{2}/U_{k}}\big)$;
  \\[6pt]
  C. &[Norm] &$\error{k} \leq U_{k}$.
  \end{tabular}
\end{bhypothesis}
These conditions are ordered in increasing strength: if (C) holds,
then (B) holds by Hoeffding's inequality
(Theorem~\ref{thm:Hoeffding Bound}), and if (B) holds, then
(A) holds because the exponential bound implies a bound on the second
moment, i.e.,
\[
\Ex \big[[e_{k}]_{i}^{2}\mid \Fscr\km1\big] = \int_{0}^{\infty}\!\! 
\Pr([e_{k}]_{i}^{2} \geq \epsilon\mid\Fscr\km1)\,d\epsilon \leq \int_{0}^{\infty}\!\!
\exp\big({-\epsilon^{2}/U_{k}}\big)\,d\epsilon < \infty.
\]

\subsection{Expectation-based and deterministic bounds} Although our
main goal is to derive tail bounds, it is useful to compare these
against the expectation-based and deterministic bounds derived in
Friedlander and Schmidt~\cite[Theorem 3.3]{FriS:2011}. We give here a
reformulation of these results, which rely on parts A and C of
Hypothesis~\ref{as:pop-bounds}.

\begin{btheorem} [Bound in
  expectation] \label{th:bound-in-expectation} If
  Hypothesis~\ref{as:pop-bounds}.A holds, then
  \[
  \Ex\pi_{k}-\rho^{k}\pi_{0} = \Oscr ([\max\{\beta,\rho\}+\zeta]^{k})
  \text{for all} \zeta>0,
  \]
  and if $\rho\ne\beta$, then the bound holds with $\zeta = 0$. If
  Hypothesis~\ref{as:pop-bounds}.C holds, than this result holds
  verbatim, except without the expectation operator.
\end{btheorem}

\begin{proof}
  For $\beta \leq \rho $, it follows from Lemma~\ref{lem:obj-val-bnd}
  and Hypothesis~\ref{as:pop-bounds}.A that
  \begin{equation} \label{eq:rho-ne-beta}
    \Ex \pi_{k}-\rho^{k}\pi_{0}  
      \leq\frac{1}{\vartheta}{\sum_{i=0}^{k-1}}\rho^{k-i-1}\Ex \error{i}
    \leq
    \frac{\lambda\rho^{k-1}}{\vartheta}\sum_{i=0}^{k-1}(\beta/\rho)^{i}
    \leq
    \frac{\lambda}{\vartheta}\rho^{k-1}k.
  \end{equation}
  Similarly, for $\beta > \rho$, 
  \begin{equation} \label{eq:rho > beta}
  \Ex \pi_{k}-\rho^{k}\pi_{0}
  \le \frac{\lambda\beta^{k-1}}{\vartheta}\sum_{i=0}^{k-1}(\rho/\beta)^{i}
  \leq\frac{\lambda}{\vartheta}\,\beta^{k-1}k.
  \end{equation}
  We summarize these last two bounds in the single expression
  \begin{equation*}
  \Ex \pi_{k}-\rho^{k}\pi_{0}\leq\frac{\lambda}{\vartheta}\max\left\{
    \beta,\rho\right\} ^{k-1}k = \Oscr( [\max\left\{
    \beta,\rho\right\}+\zeta]^{k} ) 
  \end{equation*}
  for all $\zeta > 0$.
  
  If $\beta\ne\rho$, then it follows from the second inequality
  in~\eqref{eq:rho-ne-beta} and the first inequality in~\eqref{eq:rho
    > beta}, and the summation formula for geometric series, that
  \begin{equation}
    \label{na bound}
  \Ex\pi_{k} - \rho^{k}\pi_{0} \le \frac{\lambda}{\vartheta}
  \max\{\beta,\rho\}^{k-1}\frac1{|\beta-\rho|}
  =\Oscr(\max\{\beta,\rho\}^{k}).
  \end{equation} 

  If Hypothesis~\ref{as:pop-bounds}.C holds, than the proof above
  proceeds verbatim, except that the expectation operator above can be
  removed.
\end{proof}

\subsection{Tail bounds}

The next result gives exponential tail bounds in terms the iteration
$k$, and the deviation $\epsilon$ from the linear rate of
deterministic steepest descent.

\begin{btheorem}[Tail bounds]\label{th:tail bounds} If
  Hypothesis~\ref{as:pop-bounds}.B holds, then
\begin{equation}\label{eq:1}
 \Pr(\pi\k-\rho^{k}\pi_{0}\ge\epsilon)=\Oscr
 \left(
   \exp\left[-\frac{\epsilon}{\max\{\beta,\rho\}^{k}}\cdot \zeta\right]
 \right)
 \text{for some} \zeta>0.
\end{equation}
 \end{btheorem}
\begin{proof}
  From Theorem~\ref{le:Vector-mgf-bounds} the conditioned
  moment-generating function of $\error{k}$ is bounded:
  \begin{equation}\label{eq:2}
  \Ex[\exp(\theta\error{k})\mid\Fscr_{k-1}]
  \le
  \frac{1}{1-\theta n U_{k}}
  \text{for}
  \theta \in \left[0, \frac1{nU_{k}} \right).
  \end{equation}
  Define
  $$
  \alpha_1 = \max_{k} \rho^{k-i-1}  nU_{k}
  \text{and}
  \alpha_2 =\max\left\{ \beta,\rho\right\}.
  $$
  We can now use Theorem~\ref{th:unconditional error bound}, where we
  identify $\gammabar$ with the bound in~\eqref{eq:2} (and define
  $\gammabar(\theta) = \infty$ outside of the required interval), to
  obtain the tail bound
  \begin{align}
  \Pr(\pi_{k}-\rho^{k}\pi_{0}\geq\epsilon)
  &\overset{(i)}\leq \inf_{\theta\in[0,1/\alpha_1)}
        \left\{ \frac{\exp(-\theta \vartheta \epsilon)}
          {\prod_{i=0}^{k-1}\big(1-\theta n U_{k}\rho^{k-i-1}\big)}
        \right\}
   \nonumber
\\&\overset{(ii)}{\leq}\inf_{\theta\in[0,1/\alpha_1)}
  \left\{ \frac{\exp(-\theta\vartheta\epsilon)}{\prod_{i=0}^{k-1}
      (1-\theta  n\lambda \beta^{i}\rho^{k-i-1})}
  \right\}
  \nonumber
\\&\overset{(iii)}{=}\inf_{\theta\in[0,1/\alpha_1)}
  \left\{ \frac{\exp(-\theta\vartheta\epsilon )}{
      \prod_{i=0}^{k-1}(1-\theta  n\lambda \alpha_2^{k-1}\min\{\beta/\rho,\,\rho/\beta\}^{i})
    }
  \right\}, \label{eq:20}
\end{align}
where $(i)$ follows from the definition of $\alpha_1$, $(ii)$ follows from~\eqref{eq:31}, and $(iii)$ follows from
the definition of $\alpha_2$.

Define
$\alpha_3=1+1/\log(1/\min\{\beta/\rho,\,\rho/\beta\})=1+1/\left|\log\beta-\log\rho\right|$,
and apply Lemma~\ref{Fact:Tedious_Bound} to~\eqref{eq:20} to obtain,
for all $\epsilon \geq \alpha_3 \alpha_2^{k-1} n\lambda/\vartheta$,
\begin{equation}\label{eq:23}
  \Pr
  \left(\pi_{k}-\rho^{k}\pi_{0}
    \geq \epsilon
  \right)
  \le \left( \frac{\exp(1)}{\alpha_3}
    \cdot\frac{\vartheta\epsilon}{ n\lambda \alpha_2^{k-1}} \right)^{\alpha_3} 
  \exp \left( -\frac{\vartheta\epsilon}{ n\lambda \alpha_2^{k-1}} \right).
\end{equation}

Next, note that $\min\{\beta/\rho,\,\rho/\beta\}\le1$, and so
from~\eqref{eq:20}, for all $\epsilon \geq k
\alpha_2^{k-1} n\lambda/\vartheta$,
\begin{align}
  \Pr \left(\pi_{k}-\rho^{k}\pi_{0} \geq \epsilon \right)
  &\leq\inf_{\theta\in[0,1/\alpha_1)} \left\{
    \frac{\exp(-\theta\vartheta\epsilon)} {(1-\theta  n\lambda
      \alpha_2^{k-1}\vartheta)^{k}} \right\} \nonumber \\\label{eq:25}
  &\overset{(i)}\leq \left(
    \frac{\exp(1)}{k}\cdot\frac{\vartheta \epsilon}{ n\lambda \alpha_2^{k-1}}
  \right)^{k} \exp \left( -\frac{\vartheta \epsilon}{ n\lambda \alpha_2^{k-1}}
  \right),
\end{align}
where $(i)$ follows from Lemma~\ref{Fact:Tedious_Bound}. Let
$\alphabar_{k}:=\min\{\alpha_3, k \}$. Inequalities~\eqref{eq:23}
and~\eqref{eq:25} can be expressed together, for all $\epsilon \geq
\alphabar_{k} \alpha_2^{k-1} n\lambda/\vartheta$, as
\begin{equation}\label{Main Bound}
      \Pr(\pi\k-\rho^k\pi_0\ge\epsilon) \leq 
      \left( \frac{\exp(1)}{\alphabar_{k}}\cdot\frac{\vartheta\epsilon}{ n\lambda \alpha_2^{k-1}} \right)^{\alphabar_{k}} 
      \exp \left( -\frac{\vartheta\epsilon}{ n\lambda \alpha_2^{k-1}}
      \right).
\end{equation}

Consider the case in which $\epsilon\to\infty$. Then
\begin{equation*}
      \Pr(\pi\k-\rho^k\pi_0\ge\epsilon) \leq 
      \Oscr\left[
        \exp\left(
          -\delta\cdot\frac{\vartheta\epsilon}{\alpha_2^{k-1}}
        \right)
      \right],
\end{equation*}
for some positive $\delta$ independent of $\vartheta$ and
$\alpha_2$. 

Now consider the case in which $k\to\infty$. Take the logarithm of both
sides of~\eqref{Main Bound}:
\[
\log \Pr(\pi\k-\rho^k\pi_0\ge\epsilon) \leq 
\alphabar_{k}\log
\left(
  \frac{\vartheta\epsilon}{\alphabar_{k}  n\lambda\alpha_2^{k-1}}
\right)
+\alphabar_k-\frac{\vartheta\epsilon}{ n\lambda\alpha_2^{k-1}}
=
\Oscr\left(
  -\frac{\epsilon}{\alpha_2^{k-1}}
\right).
\]
This implies~\eqref{eq:1}.
\end{proof}

\begin{bcorollary}[Overwhelming tail bounds]
  \label{overwhelm}
  Suppose that Hypothesis~\ref{as:pop-bounds}.B holds. Take $k$ fixed.
  There exists for all $A>0$ a constant $C_{A}>0$ such that
\[
\Pr\,(\pi\k-\rho^k\pi_0\ge\epsilon) 
\leq
C_{A} \epsilon^{-A}.
\]
Take $\epsilon$ fixed. There exists a constant $C_{A}>0$
such that for all $A>0$,
\[
\Pr\,(\pi\k-\rho^k\pi_0\ge\epsilon)
\leq
C_{A} A^{-k}.
\]
\end{bcorollary}

\begin{proof}
  Because the required result follows from Theorem~\ref{th:tail
    bounds}, we can pick up from the proof of that result. In
  particular, the right-hand side of~\eqref{Main Bound} can be
  equivalently expressed in two ways as
  \begin{align}\nonumber
\left( \frac{\exp(1)}{\alphabar_{k}}\cdot\frac{\vartheta\epsilon}{n\lambda \alpha_{2}^{k-1}} \right)^{\alphabar_{k}} 
      \exp \left( -\frac{\vartheta \epsilon}{n\lambda \alpha_{2}^{k-1}} \right) 
  = \begin{cases}
    \Oscr(1)\cdot\epsilon^{\alphabar_{k}}\exp(-\epsilon\cdot\Oscr(1)) \\
    \exp(\phi_{1}(k))\exp\left(-\exp\left(\phi_{2}(k)\right)\right), 
    \end{cases}
  \end{align}
  where
  \[
  \phi_{1}(k) :=
  \alphabar_{k}\log\left(\vartheta \epsilon\alpha_{2}/\alphabar_{k} \lambda\right)+
  \alphabar_{k}-k\alphabar_{k}\log\alpha_{2}
  \text{and}
  \phi_{2}(k) := \log (\vartheta \epsilon\alpha_{2}/\lambda)-k\log\alpha_{2},
  \]
  and the notation $\BigOh(1)$ stands for positive constants.  The
  result then follows from Lemma~\ref{Terry O}.
\end{proof}

\section{Stochastic and sample average  approximations}
\label{sa}
The results of section~\ref{sec:tail-bounds-sagd} are agnostic to the
source of the gradient errors that are made at each iteration. We
translate these generic results into a sampling policies that yields a
linear convergence rate, both in expectation and with overwhelming
probability.

\begin{figure}[h]
    \centering
    \includegraphics[width=.7\textwidth]{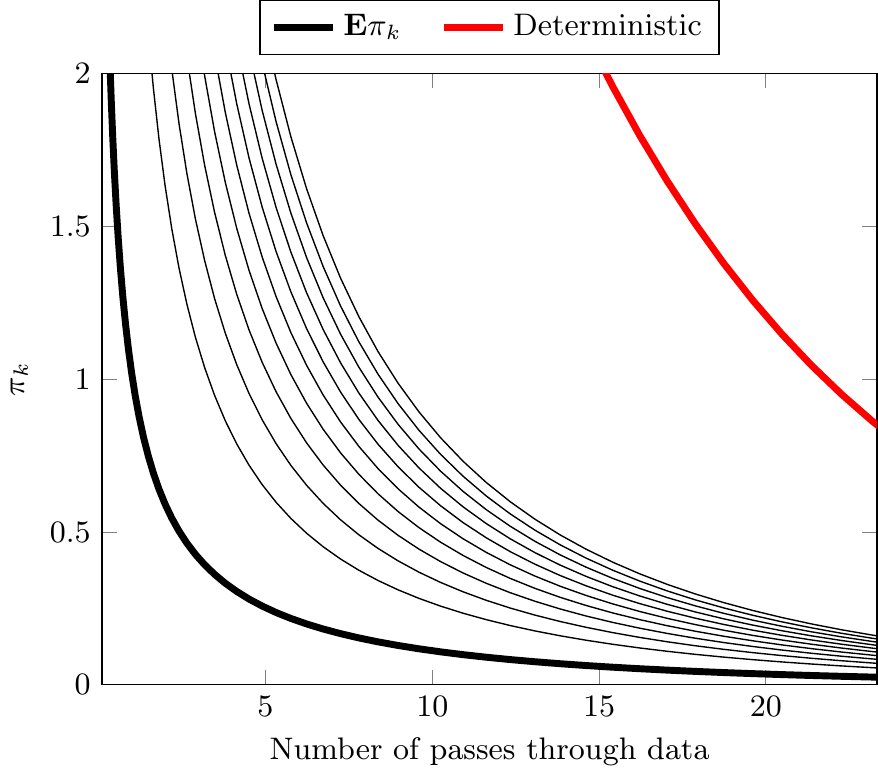}
    \caption{An illustration of the bounds derived in
      Theorem~\ref{thm:sample average bounds}; this figure plots the
      non-asymptotic bound shown in~\eqref{Main Bound}. The thick black
      line (bottom left) shows the bound in expectation (see Part~1 of
      Theorem~\ref{thm:sample average bounds}). For comparison, the
      thick red line (top right) shows the deterministic bound on the
      distance to the solution
      (see~\cite[Theorem~3.1]{FriS:2011}). The thin lines in between
      give the bounds on $\pi\k-\rho^{k}\pi_{0}$ that correspond to
      probabilities $10^{-i}$ for $i=10,20,\ldots,100$. Assume
      $M=300$, $\beta=0.9$, and $\rho=0.9$.}
    \label{fig:tail-bounds}
\end{figure}

\begin{btheorem}[Stochastic-approximation convergence
  rates] \label{as:stochastic approx bounds} Consider the stochastic-approximation
  algorithm described by~\eqref{eq:27} and~\eqref{eq:28A} where
  \[
  \frac{1}{m_{k}} \leq \lambda \beta^{k}
  \]
  for all $k$ for some $\beta\in(0,1)$ and $\lambda>0$. Then the following hold.

  \begin{enumerate}[wide]
    \medskip
  \item{} [Expectation bound] If the variance of the error is bounded, i.e.,
    \[
    \sup_{x}\Ex\|\nabla f(x) - \nabla F(x,Z)\|^2 < \infty,
    \]
    then 
  \[
  \Ex\pi_{k}-\rho^{k}\pi_{0} = \Oscr ([\max\{\beta,\rho\}+\zeta]^{k})
  \text{for all} \zeta>0.
  \]
  If $\rho\ne\beta$,
  then the bound holds with $\zeta = 0$.
    \medskip
  \item{} [Tail bound] If the diameter of the error is bounded, i.e.,
    \[
    \sup_{x}
    \left\{ \sup_{z\in\Omega} [\nabla F(x,z)]_{i} -
    \inf_{z\in\Omega}[\nabla F(x,z)]_{i}
    \right\}
    <\infty,
    \]
  for all $i=1,\ldots,n$, and $\Omega$ is the sample space, then 
  \[
 \Pr(\pi\k-\rho^{k}\pi_{0}\ge\epsilon)=\Oscr
 \left(
   \exp\left[-\frac{\epsilon}{\max\{\beta,\rho\}^{k}}\cdot \zeta\right]
 \right)
 \text{for some} \zeta>0.
 \]
  \end{enumerate}
\end{btheorem}
\begin{proof}
  \paragraph{Part 1 (Expectation Bound)} 
  Because the random variables $Z_{1},\dots,Z_{m_{k}}$ are
  independent copies of $Z$, the expected sample error is equal to the sample average. Thus,
  \begin{align*}
  \Ex \norm{e\k}^{2}
  &= 
  \frac1{m_{k}^{2}}
  \Ex
  \bigg\|
    \sum_{i=1}^{m_{k}} \big[ \nabla f(x\k) - \nabla F(x\k,Z_{i}) \big]
  \bigg\|^{2}  
  = \Ex
     \big\|
       \nabla f(x\k) - \nabla F(x\k,Z)       
     \big\|^{2}/m_{k}
\\&\le \sup_{x}\Ex\|\nabla f(x) - \nabla F(x,Z)\|^2/m_{k}
  \le \lambda \beta^{k},
  \end{align*}
  therefore satisfying Hypothesis~\ref{as:pop-bounds}.A and thus
  the hypothesis of Theorem~\ref{th:bound-in-expectation}.
  \paragraph{Part 2 (Tail Bound)} This follows from Hoeffding's
  Inequality; see Theorem~\ref{thm:Hoeffding Bound}. Thus we satisfy
  Hypothesis~\ref{as:pop-bounds}.B and therefore the hypothesis of
  Theorem~\ref{th:tail bounds}.
\end{proof}

\begin{btheorem}[Sample average gradient convergence
  rates] \label{thm:sample average bounds} Consider the algorithm
  described by~\eqref{eq:27} and~\eqref{eq:28} where
  \begin{equation}\label{eq:5}
  \frac{1}{m_{k}} 
    \left(
      1-\frac{m_{k} -1}{M}
    \right)
  \leq \lambda \beta^{k}
  \end{equation}
  for all $k$ for some $\beta\in(0,1)$ and $\lambda>0$. Then the following hold.

  \begin{enumerate}[wide]
    \medskip
  \item{} [Expectation bound] If the population variance is bounded, i.e.,
    \[
    \sup_{x} \frac{1}{M-1} \sum_{i=1}^{M} \| f(x) - \nabla f_{i}(x)\|^2 < \infty,
    \]
    then 
  \[
  \Ex\pi_{k}-\rho^{k}\pi_{0} = \Oscr ([\max\{\beta,\rho\}+\zeta]^{k})
  \text{for all} \zeta>0.
  \]
  If $\rho\ne\beta$,
  then the bound holds with $\zeta = 0$.
    \medskip
  \item{} [Tail bound] If the population diameter is bounded, i.e.,
    \[
    \sup_{x}
    \left\{ \max_{j}\, [\nabla f_{j}(x)]_{i} -
    \min_{j}\,[\nabla f_{j}(x)]_{i}
    \right\}
    <\infty,
    \] 
  for all $i=1,\ldots,n$, then 
  \[
 \Pr(\pi\k-\rho^{k}\pi_{0}\ge\epsilon)=\Oscr
 \left(
   \exp\left[-\frac{\epsilon}{\max\{\beta,\rho\}^{k}}\cdot \zeta\right]
 \right)
 \text{for some} \zeta>0.
 \]
 \item{} [Deterministic bound] If the diameter of the error is bounded, i.e.,
    \[
    \sup_{x}
    \| f_{i}(x) \|^{2} < \infty 
    \] 
    for all $i=1,\ldots,n$, then
  \[
  \pi_{k}-\rho^{k}\pi_{0} = \Oscr ([\max\{\beta,\rho\}+\zeta]^{k})
  \text{for all} \zeta>0.
  \]
  If $\rho\ne\beta$,
  then the bound holds with $\zeta = 0$.
  \end{enumerate}
\end{btheorem}

\begin{proof}
  \paragraph{Part 1 (Expectation Bound)} 
  Let 
  \[
  S(x) := \frac{1}{M-1} \sum_{i=1}^{M} \| f(x) - \nabla f_{i}(x)\|^2.
  \]
  Then from Friedlander and Schmidt~\cite[\S3.2]{FriS:2011},
  \begin{align*}
    \Ex \norm{e\k}^{2} &= \left( 1 - \frac{m_{k}}{M} \right)
    \frac{S(x_{k})}{m_{k}} 
    \le \left( 1 - \frac{m_{k}-1}{M} \right) \frac{\sup_{x}S(x)}{m_{k}}
    \le \lambda \beta^{k},
  \end{align*}
  therefore satisfying Hypothesis~\ref{as:pop-bounds}.A and thus
  the hypotheses of Theorem~\ref{th:bound-in-expectation}.
  \paragraph{Part 2 (Tail Bound)} This follows from Serfling's
  Inequality; see Theorem~\ref{thm:Serfling Bound}. Thus we satisfy
  Hypothesis~\ref{as:pop-bounds}.B and therefore the hypothesis of
  Theorem~\ref{th:tail bounds}.
  \paragraph{Part 3 (Deterministic Bound)} 
  Refer to Friedlander and Schmidt~\cite[\S3.1]{FriS:2011}.
\end{proof}

The asymptotic notation in the theorem statements helps us simplify
the results, however non asymptotic bounds are available explicitly
within the proofs. Figure~\ref{fig:tail-bounds} illustrates the non
asymptotic bounds~\eqref{na bound} and~\eqref{Main Bound} that
correspond to parts~1 and~2 of Theorem~\ref{thm:sample average
  bounds}; the deterministic bounds follow from Friedlander and
Schmidt~\cite[Theorem 3.1]{FriS:2011}.

\section{Numerical experiments}
\label{sec:numer-exper}

\begin{figure}[t]
  \centering
  \begin{tabular}{r}
      \includegraphics{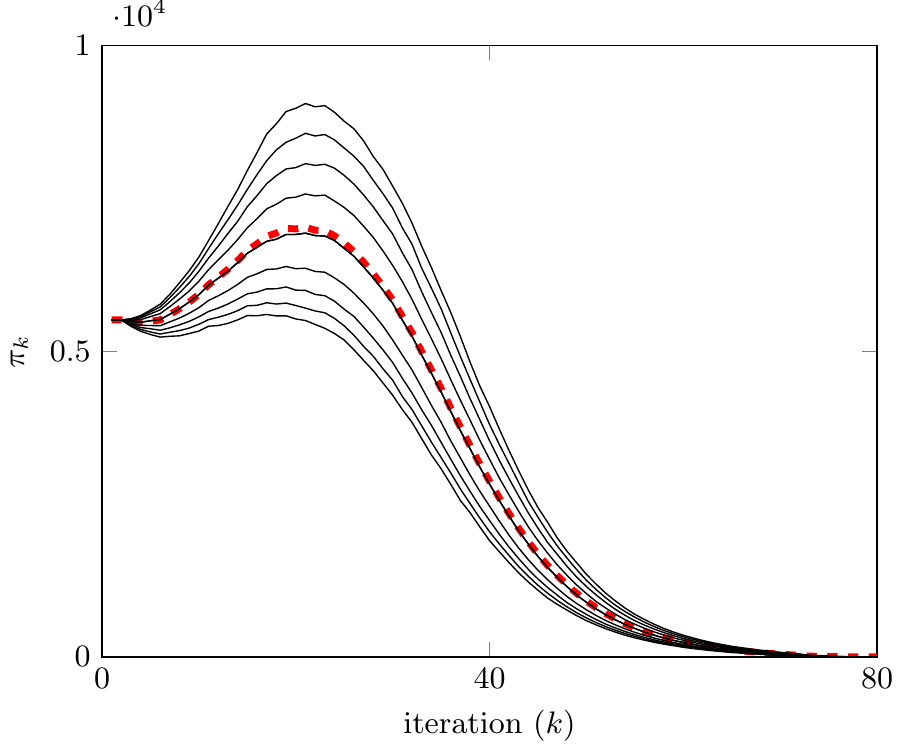}
   \\ \includegraphics{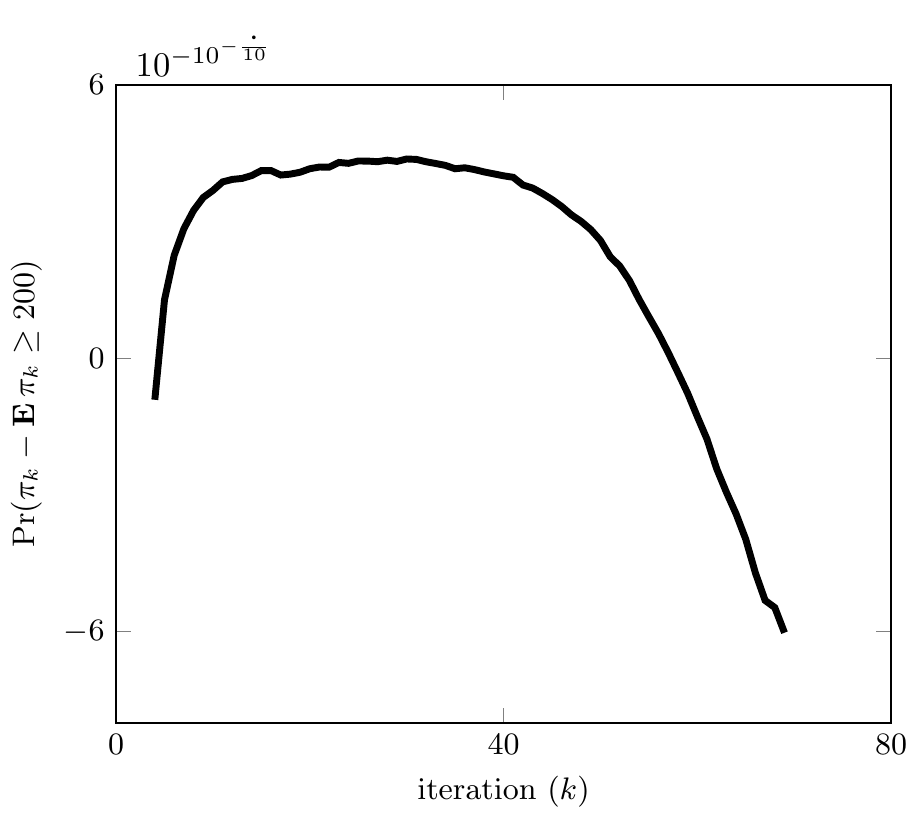}
  \end{tabular}
  \caption{Top panel: distance to solution for quantiles $1-0.5^{j}$
    and $0.5^{j}$, $j=-5:5$. Bottom panel: probability of the deviation
    from expected value against a log-log y-axis, which exhibits the
    tail that converges with a doubly-exponential tail.}
  \label{fig:simulation}
\end{figure}
 
Figure~\ref{fig:simulation} shows the results of a Monte Carlo
simulation on a logistic regression problem, where
\[
f_{i}(x) = \log(1 + \exp[-b_{i}\langle a_{i},x\rangle]),
\]
$a_{i}\in\Real^{n}$ is a vector of input features, and
$b_{i}\in\{-1,1\}$ is the corresponding observation. For this problem,
we generate a dataset with $M=100$ pairs $(a_{i},b_{i})$ of random
points. Algorithm~\eqref{eq:27} and~\eqref{eq:28}, where the sample
size satisfies~\eqref{eq:5} with $\beta \approx .91$, is run 10K times
on this fixed dataset. The starting point between runs is the same,
and the only difference is the randomness of the
sampling. Figure~\ref{fig:simulation} summarizes the results of this
experiment. As expected, the sample paths are concentrated tightly
around the mean. Furthermore, the probability of deviating from the
mean decays doubly-exponentially (cf.~\ref{thm:sample average
  bounds}), as evidenced by the linear tail shown in the bottom panel.

\begin{appendices}
\section{Proof of Lemma~\ref{lem:obj-val-bnd}}
\label{app:proof-of-obj-val-bnd}

In all results of this section, we assume that the sequence $x\k$ is
generated by~\eqref{eq:27}. For this section only, we use the
abbreviation $[\cdot]_{+}:=\prox_{(1/L)g}\set{\cdot}$. The following
result is a simple modification of the ``three-point property''
frequently used in the literature \cite{Tseng:2010}, in order to make
$e\k$ explicit.

\begin{lemma}[Three-point property with error] \label{le:3-points}
For all $y\in\dom g$,
\begin{align*}
  g(y) &\geq g(x_{k+1})
  +\innerp{\nabla f(x_{k})+e_{k}}{x_{k+1}-y}
 \\&\qquad
 +\tfrac{L}{2}\|x_{k+1}-x_{k}\|^{2}
 +\tfrac{L}{2}\|y-x_{k+1}\|^{2}-\tfrac{L}{2}\|y-x_{k}\|^{2}.
\end{align*}
\end{lemma}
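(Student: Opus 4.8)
The plan is to unpack the definition of the proximal step. Since $\alpha_k\equiv 1/L$ throughout this section, the iterate $x_{k+1}$ produced by algorithm~\eqref{eq:27} is, after scaling the objective inside $\prox_{1/L}$ by $L$, the unique minimizer of the strongly convex function
\[
\phi(y) := g(y) + \tfrac{L}{2}\bigl\| y - x_k + \tfrac1L\bigl(\nabla f(x_k)+e_k\bigr)\bigr\|^{2},
\]
uniqueness following from the $L$-strong convexity of the quadratic term together with the convexity and lower semicontinuity of $g$. First I would record the first-order optimality condition for this unconstrained convex minimization: there exists a subgradient $s\in\partial g(x_{k+1})$ with $s = -L(x_{k+1}-x_k) - (\nabla f(x_k)+e_k)$.

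Next I would invoke the subgradient inequality for $g$ at $x_{k+1}$: for every $y\in\dom g$,
\[
g(y) \ge g(x_{k+1}) + \innerp{s}{y-x_{k+1}}
     = g(x_{k+1}) + \innerp{\nabla f(x_k)+e_k}{x_{k+1}-y} + L\innerp{x_{k+1}-x_k}{x_{k+1}-y}.
\]
Then I would rewrite the last inner product using the polarization (``three-point'') identity $\innerp{a-b}{a-c} = \tfrac12\norm{a-b}^{2}+\tfrac12\norm{a-c}^{2}-\tfrac12\norm{b-c}^{2}$ with $a=x_{k+1}$, $b=x_k$, $c=y$, which converts $L\innerp{x_{k+1}-x_k}{x_{k+1}-y}$ into $\tfrac{L}{2}\norm{x_{k+1}-x_k}^{2}+\tfrac{L}{2}\norm{y-x_{k+1}}^{2}-\tfrac{L}{2}\norm{y-x_k}^{2}$. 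Substituting this back reproduces the claimed inequality verbatim.

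There is no deep obstacle here; the statement is essentially the standard three-point property of the prox map with the error term $e_k$ carried along. The only place that needs care is the bookkeeping in passing from the $\prox_{1/L}$ definition to the optimality condition — in particular keeping track of the factor $1/L$ in the proximal argument so that the coefficient of each quadratic term comes out to exactly $L/2$. If one prefers to avoid subgradient calculus with $g$ possibly taking the value $+\infty$, an equivalent route is to use the strong-convexity lower bound $\phi(y)\ge\phi(x_{k+1})+\tfrac{L}{2}\norm{y-x_{k+1}}^{2}$ for all $y$ and then expand $\tfrac{L}{2}\norm{x_{k+1}-w}^{2}-\tfrac{L}{2}\norm{y-w}^{2}$ with $w=x_k-\tfrac1L(\nabla f(x_k)+e_k)$; the cross terms again collapse to $\innerp{\nabla f(x_k)+e_k}{x_{k+1}-y}$ plus the stated quadratic remainders, and that expansion is the one spot where sign and coefficient tracking matters.
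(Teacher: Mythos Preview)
Your proposal is correct and essentially coincides with the paper's argument: the paper defines $\psi_k(x)=g(x)+f(x_k)+\innerp{\nabla f(x_k)+e_k}{x-x_k}+\tfrac{L}{2}\norm{x-x_k}^2$, invokes the $L$-strong-convexity inequality $\psi_k(y)\ge\psi_k(x_{k+1})+\tfrac{L}{2}\norm{y-x_{k+1}}^2$ at the minimizer $x_{k+1}$, and then simplifies---which is exactly the ``alternative route'' you sketch at the end. Your primary route via the subgradient of $g$ plus the polarization identity is a trivially equivalent repackaging of the same computation.
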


\begin{proof}
Let $\psi_{k}(x) := g(x)+f(x_{k})+\left\langle \nabla
  f(x_{k})+e_{k},x-x_{k}\right\rangle +\tfrac{L}{2}\|x-x_{k}\|^{2}$. 
Because $\psi_{k}$ is strongly convex,
\[
  \psi_{k}(y)\geq\psi_{k}(x)
  + \innerp{q}{y-x}
  + \tfrac{L}{2}\|y-x\|^{2}
  \quad\mbox{for all $x,y$ and all $q\in\partial\psi\k(x)$.}
\]
Choose $x=x_{k+1}:=\argmin\,\phi_{k}(x)$. Because
$0\in\partial\psi\k(x\kp1)$, we have
$\psi_{k}(y)\geq\psi_{k}(x_{k+1})+\tfrac{L}{2}\|y-x_{k+1}\|^{2}$,
which, after simplifying, yields the required result.
\end{proof}

\begin{lemma}\label{lem:fourbounds} Let $\bar{x}_k$ be the projection of $x_k$ onto $\Soln$. Then
\begin{subequations}
\begin{align} 
  \|x_{k}-\bar{x}_k\|^{\phantom2}
  &\leq\tau\|x_{k}-x_{k+1}\|+\tfrac{\tau}{L}\|e_{k}\|;
  \label{eq:a1a}
\\ \|x_{k}-\bar{x}_k\|^{2} 
  &\leq2\tau^{2}\|x_{k}-x_{k+1}\|^{2}+\tfrac{5}{4}(\tau^{2}/L^{2})\|e_{k}\|^{2};
  \label{eq:a1b}
\\ \|x_{k+1}-\bar{x}_k\|^{\phantom2} & \leq(1+\tau)\|x_{k}-x_{k+1}\|+\tfrac{\tau}{L}\|e_{k}\|;
  \label{eq:a1c}
\\ \|x_{k+1}-\bar{x}_k\|^{2} & \leq\tfrac{1}{2}[2+5\tau+3\tau^{2}]\|x_{k}-x_{k+1}\|^{2}+\tfrac{1}{2L^{2}}[3\tau^{2}+\tau]\|e_{k}\|^{2}.
  \label{eq:a1d}
\end{align}
\end{subequations}
\end{lemma}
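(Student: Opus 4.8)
The plan is to derive all four inequalities from the global error bound~\eqref{a:natural-res}. Since $h$ is convex the solution set $\Soln$ is convex, and because $\bar x_k$ is by hypothesis the Euclidean projection of $x_k$ onto $\Soln$ we have $\|x_k-\bar x_k\|=\min_{\bar x\in\Soln}\|x_k-\bar x\|$. Applying~\eqref{a:natural-res} at $x=x_k$ therefore gives
\[
  \|x_k-\bar x_k\|\;\le\;\tau\,\bigl\|x_k-x_k^{+}\bigr\|,
  \qquad x_k^{+}:=\prox_{1/L}\!\bigl(x_k-\tfrac1L\nabla f(x_k)\bigr),
\]
where $x_k^{+}$ is the \emph{exact} (error-free) proximal-gradient step from $x_k$. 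So it suffices to control $\|x_k-x_k^{+}\|$ and $\|x_{k+1}-x_k^{+}\|$ in terms of $\|x_k-x_{k+1}\|$ and $\|e_k\|$.

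The key observation is that the actual iterate $x_{k+1}=\prox_{1/L}(x_k-\tfrac1L(\nabla f(x_k)+e_k))$ and $x_k^{+}$ are images under the same proximal operator of two points differing by exactly $\tfrac1L e_k$. Since the proximal operator is (firmly) nonexpansive, $\|x_{k+1}-x_k^{+}\|\le\tfrac1L\|e_k\|$. Combining this with the triangle inequality through $x_{k+1}$,
\[
  \|x_k-x_k^{+}\|\;\le\;\|x_k-x_{k+1}\|+\|x_{k+1}-x_k^{+}\|\;\le\;\|x_k-x_{k+1}\|+\tfrac1L\|e_k\|,
\]
and multiplying by $\tau$ yields~\eqref{eq:a1a}. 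Inequality~\eqref{eq:a1c} then follows by one more triangle inequality, $\|x_{k+1}-\bar x_k\|\le\|x_{k+1}-x_k\|+\|x_k-\bar x_k\|$, and substitution of~\eqref{eq:a1a}.

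For the squared bounds~\eqref{eq:a1b} and~\eqref{eq:a1d} I would square~\eqref{eq:a1a} and~\eqref{eq:a1c}, expand, and split the resulting cross term with Young's inequality $2ab\le\eta a^{2}+\eta^{-1}b^{2}$, choosing $\eta$ so that the coefficients of $\|x_k-x_{k+1}\|^{2}$ and of $\|e_k\|^{2}$ come out to the stated values (for~\eqref{eq:a1b} this amounts to absorbing $2\tau^{2}\cdot\|x_k-x_{k+1}\|\cdot\tfrac1L\|e_k\|$; for~\eqref{eq:a1d} one must also expand $(1+\tau)^{2}$ and track the linear-in-$\tau$ terms). The main obstacle is precisely this bookkeeping: the squared estimates are sensitive to how the cross term is balanced, so the Young parameters must be picked with care to land exactly on the constants $5/4$, $\tfrac12[2+5\tau+3\tau^{2}]$, and $\tfrac1{2L^{2}}[3\tau^{2}+\tau]$ rather than merely on constants of the same order. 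Everything before that step is routine once one sees that the iteration's only role is to place $x_{k+1}$ within $\tfrac1L\|e_k\|$ of the exact proximal-gradient point $x_k^{+}$ to which~\eqref{a:natural-res} applies directly.
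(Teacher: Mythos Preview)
Your proposal is correct and follows essentially the same route as the paper's proof: the paper also invokes the global error bound~\eqref{a:natural-res} at $x_k$, compares $x_{k+1}$ to the exact proximal step via nonexpansiveness of the prox, strings the pieces together with the triangle inequality for~\eqref{eq:a1a} and~\eqref{eq:a1c}, and then squares and splits the cross term with Young's inequality (their~\eqref{eq:crossterm}) for~\eqref{eq:a1b} and~\eqref{eq:a1d}. The only thing the paper does that you leave implicit is the explicit algebra with a specific choice of the Young parameter; your instinct that this bookkeeping is the only nontrivial part is accurate.
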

\begin{proof}
\paragraph{Part \eqref{eq:a1a}} For all $k$,
\begin{align*}
\|x_{k}-\bar{x}_k\|
  &\overset{(i)}{\leq}\tau\|x_{k}
  - [x_{k}-\tfrac{1}{L}\nabla f(x_{k})]_{+}\|
\\&\leq
    \tau \|x_{k}-x_{k+1}\|
  + \tau \|x_{k+1}
  - [x_{k}-\tfrac{1}{L}\nabla f(x_{k})]_{+}\|
\\&=\tau \|x_{k}-x_{k+1}\|
  + \tau \|[x_{k}-\tfrac{1}{L}(\nabla f(x)+e_{k})]_{+}
          -[x_{k}-\tfrac{1}{L}\nabla f(x_{k})]_{+}\|
\\& \overset{(ii)}{\leq}\tau\|x_{k}-x_{k+1}\|+\tfrac{\tau}{L}\|e_{k}\|,
\end{align*}
where $(i)$ follows from Assumption~\eqref{a:natural-res} and $(ii)$
follows from the nonexpansiveness of the proximal operator.

\paragraph{Part \eqref{eq:a1b}} Square both sides of~\eqref{eq:a1a}
and then apply the inequality
\begin{equation}\label{eq:crossterm}
ab\leq\frac{a^{2}}{2\alpha}+\frac{\alpha b^{2}}{2},
\quad \forall\alpha>0,
\end{equation}
to bound the cross terms:
\begin{align*}
  \|x_{k}-\bar{x}_k\|^{2}
  & \leq\tau^{2}\|x_{k}-x_{k+1}\|^{2}+(\tau/L)^{2}\|e_{k}\|^{2}+(\tau^{2}/L)\|x_{k}-x_{k+1}\|\|e_{k}\|
\\& \leq\big(\tau^{2}+\tfrac{\tau^{2}\alpha}{2L}\big)\|x_{k}-x_{k+1}\|^{2}+\big(\tfrac{\tau^{2}}{L^{2}}+\tfrac{\tau^{2}}{2L\alpha}\big)\|e_{k}\|^{2}\qquad(\forall\alpha>0)
\\& \leq2\tau^{2}\|x_{k}-x_{k+1}\|^{2}+\tfrac{5}{4}(\tau^{2}/L^{2})\|e_{k}\|^{2}.
\end{align*}

\paragraph{Part \eqref{eq:a1c}} Use the triangle inequality
and~\eqref{eq:a1a}:
\[
  \|x_{k+1}-\bar{x}_k\|
  \leq\|x_{k+1}-x_{k}\|+\|x_{k}-\bar{x}_k\|
  \leq(1+\tau)\|x_{k}-x_{k+1}\|+(\tau/L)\|e_{k}\|.
\]

\paragraph{Part \eqref{eq:a1d}} Square both sides above, and use the
same technique used in Part~\eqref{eq:a1b} to bound the cross-terms:
\[
  \|x_{k+1}-\bar{x}_k\|^{2}
  \leq\tfrac{1}{2}(2+5\tau+3\tau^{2})\|x_{k}
  -x_{k+1}\|^{2}+\tfrac{1}{2L^{2}}(3\tau^{2}+\tau)\|e_{k}\|^{2}.
\]
\end{proof}

\begin{lemma}[Sufficient decrease] \label{lem:sufficient_decrease} For all $k$, 
\[
\pi_{k+1}\leq\left(1-\frac{1}{1+40\tau^{2}}\right)\pi_{k}+\frac{1}{L}\cdot\frac{40\tau^{2}}{1+40\tau^{2}}\|e_{k}\|^{2}.
\]
\end{lemma}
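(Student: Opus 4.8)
The plan is to follow the two-estimate scheme of Luo and Tseng~\cite{luo1993error}, carrying the error term explicitly as has already been set up in Lemmas~\ref{le:3-points} and~\ref{lem:fourbounds}: first derive (i) a \emph{sufficient-decrease} inequality bounding $\pi_k-\pi_{k+1}$ below by $c_1\|x_k-x_{k+1}\|^2-c_2\error{k}$, then (ii) a \emph{cost-to-go} inequality bounding $\pi_{k+1}$ above by $c_3\|x_k-x_{k+1}\|^2+c_4\error{k}$, and finally eliminate $\|x_k-x_{k+1}\|^2$ between the two.

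For (i), apply Lemma~\ref{le:3-points} with $y=x_k$ --- its last three terms collapse to $L\|x_{k+1}-x_k\|^2$ --- and add the descent inequality $f(x_{k+1})\le f(x_k)+\innerp{\nabla f(x_k)}{x_{k+1}-x_k}+\tfrac L2\|x_{k+1}-x_k\|^2$ implied by~\eqref{a:lipschitz}. The gradient terms cancel, leaving $\pi_{k+1}\le\pi_k+\innerp{e_k}{x_k-x_{k+1}}-\tfrac L2\|x_k-x_{k+1}\|^2$; bounding the inner product with~\eqref{eq:crossterm} and a free parameter $\alpha\in(0,L)$ gives
\[
\pi_{k+1}\le\pi_k-\tfrac{L-\alpha}{2}\|x_k-x_{k+1}\|^2+\tfrac{1}{2\alpha}\error{k},
\]
i.e.\ $c_1=(L-\alpha)/2$ and $c_2=1/(2\alpha)$; the choice $\alpha=L/2$ recovers the inequality used at the start of the proof of Theorem~\ref{thm:supermartingale}.

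For (ii), apply Lemma~\ref{le:3-points} with $y=\xbar_k$, the projection of $x_k$ onto \Soln, add the same descent inequality, and use convexity of $f$ in the form $f(x_k)+\innerp{\nabla f(x_k)}{\xbar_k-x_k}\le f(\xbar_k)$. Using $h(\xbar_k)=\inf h$, everything collapses to $\pi_{k+1}\le\innerp{e_k}{\xbar_k-x_{k+1}}-\tfrac L2\|\xbar_k-x_{k+1}\|^2+\tfrac L2\|x_k-\xbar_k\|^2$; since $\innerp{e_k}{\xbar_k-x_{k+1}}-\tfrac L2\|\xbar_k-x_{k+1}\|^2\le\tfrac1{2L}\error{k}$, this leaves $\pi_{k+1}\le\tfrac1{2L}\error{k}+\tfrac L2\|x_k-\xbar_k\|^2$. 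Now invoke Assumption~\eqref{a:natural-res} in the packaged form of Lemma~\ref{lem:fourbounds}b to obtain $\pi_{k+1}\le L\tau^2\|x_k-x_{k+1}\|^2+\tfrac{4+5\tau^2}{8L}\error{k}$, i.e.\ $c_3=L\tau^2$ and $c_4=(4+5\tau^2)/(8L)$.

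Finally, rearrange (ii) to $\|x_k-x_{k+1}\|^2\ge(\pi_{k+1}-c_4\error{k})/c_3$ and substitute into (i); solving for $\pi_{k+1}$ yields
\[
\pi_{k+1}\le\frac{c_3}{c_1+c_3}\,\pi_k+\frac{c_1c_4+c_2c_3}{c_1+c_3}\,\error{k}.
\]
With $\alpha=19L/20$ (so $c_1=L/40$ and $c_2=10/(19L)$) one has $c_3=40\tau^2c_1$, so the coefficient of $\pi_k$ is exactly $40\tau^2/(1+40\tau^2)=1-1/(1+40\tau^2)$; and because $\tau\ge1$, a short arithmetic check gives $(c_1c_4+c_2c_3)/(c_1+c_3)\le(1/L)\cdot 40\tau^2/(1+40\tau^2)$, and replacing the coefficient of the nonnegative quantity $\error{k}$ by this larger value only weakens the bound --- giving the claim. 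The main obstacle is exactly this last bookkeeping: the two applications of~\eqref{eq:crossterm} (and the slack one can afford, since $\pi_k\ge0$ and only an upper bound is needed) must be tuned so that the messy constants collapse to the clean values $1-1/(1+40\tau^2)$ and $40\tau^2/(L(1+40\tau^2))$; no step is conceptually delicate.
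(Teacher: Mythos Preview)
Your two-estimate scheme is exactly the template the paper follows, and your combination step
\[
\pi_{k+1}\le\frac{c_3}{c_1+c_3}\,\pi_k+\frac{c_1c_4+c_2c_3}{c_1+c_3}\,\error{k}
\]
with the choice $\alpha=19L/20$ checks out (since $\tau\ge1$, the error coefficient is indeed dominated by $40\tau^2/[L(1+40\tau^2)]$). But your cost-to-go estimate~(ii) differs from the paper's in one substantive way: you invoke convexity of $f$ to pass from $f(x_k)+\innerp{\nabla f(x_k)}{\xbar_k-x_k}$ to $f(\xbar_k)$. Convexity of $f$ is \emph{not} among the blanket assumptions in~\S\ref{sec:assumptions}; only the Lipschitz gradient~\eqref{a:lipschitz} and the error bound~\eqref{a:natural-res} are assumed. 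The paper's derivation instead writes $f(x_{k+1})-f(\xbar_k)=\innerp{\nabla f(\xi)}{x_{k+1}-\xbar_k}$ via the Mean Value Theorem and then controls $\innerp{\nabla f(\xi)-\nabla f(x_k)}{x_{k+1}-\xbar_k}$ using~\eqref{a:lipschitz} together with all four parts of Lemma~\ref{lem:fourbounds}; this avoids convexity altogether, at the price of much messier bookkeeping and a looser intermediate constant ($10L\tau^2$ in place of your $L\tau^2$).

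So: if $f$ is convex your argument is a genuine simplification---one application of Lemma~\ref{lem:fourbounds}b suffices, versus the paper's repeated use of all four parts---and the tuned $\alpha$ lets you hit the stated constants cleanly. If convexity of $f$ is not available, your step~(ii) fails as written and you would need to fall back on the MVT route.
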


\begin{proof} First, specialize Lemma~\ref{le:3-points} with $y=x_{k}$:
\begin{equation} \label{eq:3pta}
g(x_{k+1}) \leq g(x_{k})-\left\langle \nabla f(x_{k})+e_{k},x_{k+1}-x_{k}\right\rangle -L\|x_{k+1}-x_{k}\|^{2}.
\end{equation}
Then,
\begin{align*}
h(x_{k+1}) & \overset{(i)}{\leq}f(x_{k})+\left\langle \nabla f(x_{k}),x_{k+1}-x_{k}\right\rangle +\tfrac{L}{2}\|x_{k+1}-x_{k}\|^{2}+g(x_{k+1})\\
 & \overset{(ii)}{\leq}f(x_{k})+\left\langle \nabla f(x_{k}),x_{k+1}-x_{k}\right\rangle +\tfrac{L}{2}\|x_{k+1}-x_{k}\|^{2}+g(x_{k})\\
 &\qquad-\left\langle \nabla f(x_{k})+e_{k},x_{k+1}-x_{k}\right\rangle -L\|x_{k+1}-x_{k}\|^{2}\\
 &= h(x_{k})-\left\langle e_{k},x_{k+1}-x_{k}\right\rangle -\tfrac{L}{2}\|x_{k}-x_{k+1}\|^{2}\\
 &\leq h(x_{k})+\tfrac{1}{2\alpha}\|e_{k}\|^{2}+\big(\tfrac{\alpha}{2}-\tfrac{L}{2}\big)\|x_{k}-x_{k+1}\|^{2},
\end{align*}
where $(i)$ uses Assumption~\eqref{a:lipschitz} and $(ii)$ uses
the~\eqref{eq:3pta}. Choose $\alpha=L/2$ and rearrange terms to obtain
the required result.
\end{proof}

We now proceed with the proof of Lemma~\ref{lem:obj-val-bnd}.  Let
$\bar{x}_k$ be the projection of $x_k$ onto the solution set
$\Soln$. By the mean value theorem,
\begin{equation} \label{eq:eq_mv}
  f(x_{k+1})-f(\bar{x}_k)
  = \left\langle \nabla f(\xi),x_{k+1}-\bar{x}_k\right\rangle.
\end{equation}
From Lemma~\ref{le:3-points}, we have
\begin{align} \label{eq:eq_3pt1}
g(x_{k+1})-g(\bar{x}_k) 
& \leq-\left\langle \nabla f(x_{k})+e_{k},x_{k+1}-\bar{x}_k\right\rangle \nonumber \\ 
& \qquad-\tfrac{L}{2}\|x_{k+1}-x_{k}\|^{2}-\tfrac{L}{2}\|\bar{x}_k-x_{k+1}\|^{2}+\tfrac{L}{2}\|\bar{x}_k-x_{k}\|^{2}\nonumber \\
& \leq-\left\langle \nabla f(x_{k})+e_{k},x_{k+1}-\bar{x}_k\right\rangle +\tfrac{L}{2}\|\bar{x}_k-x_{k}\|^{2}.
\end{align}
Also note that
\begin{align*}
\left\langle \nabla f(\xi)-\nabla f(x_{k}),x_{k+1}-\bar{x}_k\right\rangle 
 &\leq\|\nabla f(\xi)-\nabla f(x_{k})\|\|x_{k+1}-\bar{x}_k\|
\\&\overset{(i)}{\leq}L\|\xi-x_{k}\|\|x_{k+1}-\bar{x}_k\|
\\&\leq L[\|x_{k+1}-x_{k}\|+\|x_{k}-\bar{x}_k\|]\cdot\|x_{k+1}-\bar{x}_k\|
\\&\leq[L(1+\tau)\|x_{k}-x_{k+1}\|+\tau\|e_{k}\|]
\\&\qquad\cdot[(1+\tau)\|x_{k}-x_{k+1}\|+\tfrac{\tau}{L}\|e_{k}\|]
\\&= L(1+\tau)^{2}\|x_{k}-x_{k+1}\|^{2}
\\& \qquad +2[\tau(1+\tau)]\|x_{k}-x_{k+1}\|\|e_{k}\|+\tau^{2}/L\|e_{k}\|^{2}
\\&\leq[L(1+\tau)^{2}+\tfrac{1}{\alpha}\tau(1+\tau)]\|x_{k}-x_{k+1}\|^{2}
\\&\qquad+[\tau^{2}/L+\alpha\tau(1+\tau)]\|e_{k}\|^{2}
\\&\leq L(1+3\tau+2\tau^{2})\|x_{k}-x_{k+1}\|^{2}+\tfrac{1}{L}(2\tau^{2}+\tau)\|e_{k}\|^{2},
\end{align*}
where $(i)$ follows from~\eqref{a:lipschitz}. In the steps which
follow, we apply the relevant inequalities in
Lemma~\ref{lem:fourbounds}, group terms, bound every cross term
using~\eqref{eq:crossterm}, and repeat the process until we reach the
final result:
\begin{align*}
h(x_{k+1})-h(\bar{x}_k)
&\overset{(i)}\leq\left\langle \nabla f(\xi)-\nabla f(x_{k}),x_{k+1}-\bar{x}_k\right\rangle -\left\langle e_{k},x_{k+1}-\bar{x}_k\right\rangle +\tfrac{L}{2}\|\bar{x}_k-x_{k}\|^{2}\\
 & \leq L(1+3\tau+2\tau^{2})\|x_{k}-x_{k+1}\|^{2}+\tfrac{1}{L}(2\tau^{2}+\tau)\|e_{k}\|^{2}\\
 & \qquad+\tfrac{\alpha}{2}\|e_{k}\|^{2}+\tfrac{1}{2\alpha}\|x_{k+1}-\bar{x}_k\|^{2}+\tfrac{L}{2}\|\bar{x}_k-x_{k}\|^{2} \qquad \forall \alpha >0\\
 & \leq L(1+3\tau+2\tau^{2})\|x_{k}-x_{k+1}\|^{2}+\tfrac{1}{L}(2\tau^{2}+\tau)\|e_{k}\|^{2}\\
 & \qquad+\tfrac{\alpha}{2}\|e_{k}\|^{2}+\tfrac{1}{4\alpha}[2+5\tau+3\tau^{2}]\|x_{k}-x_{k+1}\|^{2}\\
 & \qquad +\tfrac{1}{4L^{2}\alpha}[3\tau^{2}+\tau]\|e_{k}\|^{2}+L\tau^{2}\|x_{k}-x_{k+1}\|^{2}+\tfrac{5L}{8}(\tau^{2}/L^{2})\|e_{k}\|^{2}\\
 & \leq\left(L(1+3\tau+2\tau^{2})+\tfrac{1}{4\alpha}[2+5\tau+3\tau^{2}]+L\tau^{2}\right)\|x_{k}-x_{k+1}\|^{2}+\\
 & \qquad\left(\tfrac{1}{L}(2\tau^{2}+\tau)+\tfrac{\alpha}{2}+\tfrac{1}{4L^{2}\alpha}[3\tau^{2}+\tau]+\tfrac{5L}{8}(\tau^{2}/L^{2})\right)\|e_{k}\|^{2}\\
 & \overset{(ii)}{\leq}10L\tau^{2}\|x_{k}-x_{k+1}\|^{2}+\tfrac{1}{L}10\tau^{2}\|e_{k}\|^{2}\\
 & \overset{(iii)}{\leq}40\tau^{2}[h(x_{k})-h(x_{k+1})]+(4/L^2+\tfrac{1}{L}10\tau^{2})\|e_{k}\|^{2}\\
  &\leq40\tau^{2}[h(x_{k})-h(x_{k+1})]+\tfrac{1}{L}40\tau^{2}\|e_{k}\|^{2}.
\end{align*}
In the steps above, $(i)$ follows by add inequalities~\eqref{eq:eq_mv}
and~\eqref{eq:eq_3pt1}. Also, we make use of
Lemma~\ref{lem:fourbounds} to bound all stray terms in terms of
$\|x_k-x_{k+1}\|^2$ and $\|e_k\|^2$, and Equation~\eqref{eq:crossterm}
to bound the cross-terms. In $(ii)$ we make use of the assumption that
$\tau\geq1$ and set $\alpha=1/L$. Finally, in $(iii)$ we make use of
Lemma~\ref{lem:sufficient_decrease} to transition from a bound on the
distance between successive iterates $x_k$ to differences in
successive values $h(x_k)$.  Rearranging terms, we get
\[
(1+40\tau^{2})h(x_{k+1})-(1+40\tau^{2})h(\bar{x}_k)\leq40\tau^{2}(h(x_{k})-h(\bar{x}_k))+\tfrac{1}{L}40\tau^{2}\|e_{k}\|^{2},
\]
which is true if and only if the desired result holds:
\[
\pi_{k+1}\leq\left(1-\frac{1}{1+40\tau^{2}}\right)\pi_{k}+\frac{1}{L}\cdot\frac{40\tau^{2}}{1+40\tau^{2}}\|e_{k}\|^{2}.
\]

\section{Auxiliary results}

\ 
\begin{lemma}\label{Terry O} 
  Suppose that
  \begin{align*}
    \phi_{1}(k)&=\Oscr(k^{\Oscr(1)})\exp(-\Oscr(k^{\Oscr(1)})),
  \\\phi_{2}(k)&=\exp(\Oscr(k^{\Oscr(1)}))\exp(-\exp(\Oscr(k^{\Oscr(1)}))),
  \end{align*}
  where $\Oscr(1)$ stands for positive constants. Then for each $A>0$ there exists a positive constant $C_{A}$ such
  that
  \begin{align}
    \phi_{1}(k)&\leq C_{A} k^{-A}, \label{eq:22}
  \\\phi_{2}(k)&\leq C_{A} A^{-k}. \label{eq:24}
  \end{align}
\end{lemma}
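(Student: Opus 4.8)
The plan is to treat these two inequalities as exactly what they are: the fact that an exponential outgrows a polynomial, used once for~\eqref{eq:22} and in nested form for~\eqref{eq:24}. In each case I would unfold the $\Oscr$'s into explicit constants, reduce the claim to the boundedness of an explicit real-valued function on $[1,\infty)$, and absorb the finitely many small~$k$ (for which the asymptotic hypotheses need not yet hold) into the constant by taking a maximum; the bound is in any case trivial at every $k$ where $\phi_1(k)\le 0$ or $\phi_2(k)\le 0$.

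For~\eqref{eq:22}: unfolding the hypothesis --- with the exponent $-\Oscr(k^{\Oscr(1)})$ read as a quantity that is at least a positive constant times a positive power of $k$ --- there are positive constants $C,a,b,c$ and an index $k_1$ with $\phi_1(k)\le C k^{a}\exp(-c k^{b})$ for all $k\ge k_1$. Fix $A>0$; then $k^{A}\phi_1(k)\le C k^{a+A}\exp(-c k^{b})$ for $k\ge k_1$. The function $t\mapsto C t^{a+A}\exp(-c t^{b})$ is continuous on $[1,\infty)$ and tends to $0$ as $t\to\infty$, hence is bounded there, say by $M_A$; taking $C_A:=\max\{M_A,\,\max_{1\le k<k_1}k^{A}\phi_1(k)\}$ gives $\phi_1(k)\le C_A k^{-A}$ for every $k$.

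For~\eqref{eq:24}: similarly (now reading the innermost $\Oscr(k^{\Oscr(1)})$ as a lower bound of the same type) there are positive constants $C',a',b',c'$ and an index $k_2$ with $\phi_2(k)\le \exp(C' k^{a'})\exp(-\exp(c' k^{b'}))$ for $k\ge k_2$. Assuming $\phi_2(k)>0$ and taking logarithms, $\log\phi_2(k)\le C' k^{a'}-\exp(c' k^{b'})$, so for fixed $A>0$ one has $\log\phi_2(k)+k\log A\le C' k^{a'}+k\log A-\exp(c' k^{b'})$. Since $c' k^{b'}$ dominates the logarithm of any polynomial in $k$, the term $\exp(c' k^{b'})$ eventually outgrows $C' k^{a'}+k\log A$, so the right-hand side tends to $-\infty$ and is in particular bounded above on $[k_2,\infty)$, say by $\log M_A'$. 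Hence $\phi_2(k)\le M_A' A^{-k}$ for $k\ge k_2$, and absorbing the indices $k<k_2$ as before yields the required $C_A$.

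I do not expect a real obstacle: the whole content is ``an exponential beats a polynomial,'' invoked twice. The only points needing (minor) care are parsing the nested $\Oscr$ notation in the hypotheses consistently --- in particular recognizing that the inner exponents carry genuine positive powers of $k$, so that the doubly-exponential factor underlying~\eqref{eq:24} really is present --- and the routine bookkeeping over the initial block of small~$k$.
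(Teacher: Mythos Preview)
Your proposal is correct and is essentially the same argument the paper gives: the paper's entire proof is the single sentence ``the statement follows by taking the logarithms on both sides of~\eqref{eq:22} and~\eqref{eq:24},'' which is exactly the ``exponential beats polynomial'' reduction you carry out. Your version is considerably more detailed---unpacking the nested $\Oscr$'s, explicitly bounding $k^{A}\phi_1(k)$ and $\log\phi_2(k)+k\log A$, and handling the finitely many small $k$---but the underlying idea is identical.
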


\begin{proof}
  The statement follows by taking the logarithms on both sides
  of~\eqref{eq:22} and~\eqref{eq:24}.
\end{proof}

\begin{lemma} \label{fact:Q-Pochhammer_Lower_Bound}
  For $y\in(0,1)$ and $x\in[0,1]$,
\begin{equation} \label{eq:11}
(1-x)^{1-1/\log y}\leq\prod_{i=0}^{\infty}(1-xy^{i}).
\end{equation}
\end{lemma}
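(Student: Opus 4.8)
The plan is to take logarithms and reduce the inequality to a comparison between two explicit scalar constants. The boundary cases are immediate: if $x=0$ both sides equal $1$, and if $x=1$ both sides equal $0$ (the product has a vanishing $i=0$ factor, and the exponent $1-1/\log y$ is positive since $\log y<0$). So assume $x\in(0,1)$. Then each factor satisfies $1-xy^{i}\in(0,1)$, and $\sum_{i\ge0}xy^{i}=x/(1-y)<\infty$, so the product converges to a positive number and $\log\prod_{i=0}^{\infty}(1-xy^{i})=\sum_{i=0}^{\infty}\log(1-xy^{i})$. Hence it suffices to prove
\[
\sum_{i=0}^{\infty}\log(1-xy^{i})\ \ge\ \Bigl(1-\tfrac{1}{\log y}\Bigr)\log(1-x).
\]

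First I would bound each term from below using concavity of $t\mapsto\log(1-t)$ on $[0,1)$. Since $0\le xy^{i}\le x$, the point $xy^{i}$ lies between $0$ and $x$, and concavity puts the graph above the chord joining $(0,0)$ and $(x,\log(1-x))$, giving $\log(1-xy^{i})\ge y^{i}\log(1-x)$. Summing the geometric series,
\[
\sum_{i=0}^{\infty}\log(1-xy^{i})\ \ge\ \log(1-x)\sum_{i=0}^{\infty}y^{i}\ =\ \frac{\log(1-x)}{1-y}.
\]
It then remains only to compare $1/(1-y)$ with $1-1/\log y$. Both are positive on $(0,1)$, and since $\log(1-x)\le0$, multiplying an inequality between them by $\log(1-x)$ reverses its direction; so the proof closes once we verify
\[
\frac{1}{1-y}\ \le\ 1-\frac{1}{\log y}\qquad\text{for }y\in(0,1).
\]

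This last scalar inequality is the only genuinely non-routine step. Clearing denominators, using $\log y<0$ and $1-y>0$ (each sign change flips the inequality exactly once), it is equivalent to $\phi(y):=y-1-y\log y\le0$ on $(0,1)$, which I would prove by observing that $\phi(1)=0$ and $\phi'(y)=-\log y>0$ for $y\in(0,1)$, so $\phi$ is strictly increasing and stays below $\phi(1)=0$. Chaining the estimates, $\sum_{i\ge0}\log(1-xy^{i})\ge\log(1-x)/(1-y)\ge(1-1/\log y)\log(1-x)$, and exponentiating (a monotone operation) yields $(1-x)^{1-1/\log y}\le\prod_{i=0}^{\infty}(1-xy^{i})$, as claimed. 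I would add a remark that the argument in fact establishes the sharper bound $(1-x)^{1/(1-y)}\le\prod_{i=0}^{\infty}(1-xy^{i})$, the stated form being the convenient weakening used later (cf.\ the constant $\alpha=1-(\log\rho)^{-1}$ in Example~\ref{ex:ex-gaussian-noise-3}).
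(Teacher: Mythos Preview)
Your proof is correct and takes a genuinely different route from the paper's. The paper bounds each term via the tangent-line inequality $\log(1-t)\ge -t/(1-t)$, obtaining $\sum_{i\ge1}\log(1-xy^{i})\ge -\sum_{i\ge1}\frac{xy^{i}}{1-xy^{i}}$, and then replaces the sum by the integral $\int_{0}^{\infty}\frac{xy^{s}}{1-xy^{s}}\,ds$, which evaluates in closed form to $\log(1-x)/\log y$; multiplying in the $i=0$ factor yields the exponent $1-1/\log y$ directly. You instead use the chord inequality from concavity, $\log(1-xy^{i})\ge y^{i}\log(1-x)$, sum the geometric series to obtain the intermediate bound $(1-x)^{1/(1-y)}\le\prod_{i\ge0}(1-xy^{i})$, and then recover the stated form via the scalar inequality $\tfrac{1}{1-y}\le 1-\tfrac{1}{\log y}$. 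Your argument is more elementary (no integral comparison) and, as you note, actually proves the sharper lower bound $(1-x)^{1/(1-y)}$; the paper's approach has the minor advantage of landing on the exact exponent used downstream without a separate comparison step.
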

\begin{proof}
  To prove the lower bound, we use the following fact:
  \begin{equation*} 
    \ln(1-x)  \geq-\frac{x}{1-x}
    \quad
    \mbox{for all} \quad x\in[0,1).
  \end{equation*}
  Therefore,
\begin{align*}
\prod_{i=1}^{\infty}(1-xy^{i}) & =\exp\left(\sum_{i=1}^{\infty}\log\left(1-xy^{i}\right)\right)\\
 & \geq\exp\left(\sum_{i=1}^{\infty}-\frac{y^{i}}{1/x-y^{i}}\right)\\
 & \geq\exp\left(-\int_{0}^{\infty}\frac{y^{i}}{1/x-y^{i}}di\right)\\
 & =\exp\left(-\frac{\log(1-x)}{\log(y)}\right)
  \geq(1-x)^{-1/\log y}.
\end{align*}
Thus,
\[
\prod_{i=0}^{\infty}(1-xy^{i})  =(1-x)\prod_{i=1}^{\infty}(1-xy^{i})
\ge(1-x)^{1-1/\log y},
\]
as required.
\end{proof}

\begin{lemma}
For $y\in(0,1)$ and $x\in[0,1]$,
\[
\exp\left(-\frac{\log(1-x/y)-\log(1-xy^{N+1})}{\log(y)}\right)\leq\prod_{i=0}^{N}(1-xy^{i}).
\]
\end{lemma}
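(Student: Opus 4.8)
The plan is to follow the proof of Lemma~\ref{fact:Q-Pochhammer_Lower_Bound} almost verbatim, replacing its infinite product and improper integral with a finite product and a proper integral. Throughout I will assume the left-hand side is well defined, which forces $x<y$ (so that $1-x/y>0$); the case $x=0$ is trivial since both sides equal $1$, so take $x\in(0,y)$, and note that then $xy^{i}\in[0,1)$ for every $i\in\{0,\dots,N\}$, so each factor $1-xy^{i}$ is positive and the product may be passed through a logarithm.

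First I would take logarithms and apply the same elementary bound $\ln(1-t)\ge -t/(1-t)$ used in Lemma~\ref{fact:Q-Pochhammer_Lower_Bound}, obtaining
\[
\log\prod_{i=0}^{N}(1-xy^{i})=\sum_{i=0}^{N}\log(1-xy^{i})\ \ge\ -\sum_{i=0}^{N}\frac{xy^{i}}{1-xy^{i}}=-\sum_{i=0}^{N}\phi(i),
\]
where $\phi(t):=y^{t}/(1/x-y^{t})$. It therefore suffices to bound $\sum_{i=0}^{N}\phi(i)$ from above by $\bigl[\log(1-x/y)-\log(1-xy^{N+1})\bigr]/\log y$.

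Next I would use monotonicity together with an integral comparison. A short computation gives $\phi'(t)=(1/x)\,y^{t}(\log y)\,/\,(1/x-y^{t})^{2}<0$ on $[-1,\infty)$ — it is here that $x<y$ is used, to keep $1/x-y^{t}>0$ — so $\phi$ is positive and decreasing there. Hence $\phi(i)\le\int_{i-1}^{i}\phi(t)\,dt$ for each $i$, and summing over $i=0,\dots,N$ and then enlarging the interval (legitimate since $\phi\ge0$) gives
\[
\sum_{i=0}^{N}\phi(i)\ \le\ \int_{-1}^{N}\phi(t)\,dt\ \le\ \int_{-1}^{N+1}\phi(t)\,dt .
\]
The substitution $u=y^{t}$ yields the antiderivative $\int\phi(t)\,dt=-\log(1/x-y^{t})/\log y$, and since $\log(1/x-s)=\log(1-xs)-\log x$ the $\log x$ terms cancel, leaving
\[
\int_{-1}^{N+1}\phi(t)\,dt=\frac{\log(1/x-1/y)-\log(1/x-y^{N+1})}{\log y}=\frac{\log(1-x/y)-\log(1-xy^{N+1})}{\log y}.
\]
Chaining these inequalities with the logarithmic bound above and exponentiating produces the claimed inequality.

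I do not expect any genuinely hard step; the one thing to watch is sign-tracking, since $\log y<0$. In particular one must confirm that enlarging $\int_{-1}^{N}$ to $\int_{-1}^{N+1}$ weakens the bound in the desired direction — which is precisely why the statement features $xy^{N+1}$ rather than the marginally sharper $xy^{N}$ obtainable from stopping the integral at $N$ — and that the final exponentiation preserves the inequality. Everything else is the same bookkeeping as in Lemma~\ref{fact:Q-Pochhammer_Lower_Bound}.
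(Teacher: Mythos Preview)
Your proof is correct and follows essentially the same approach as the paper: the elementary bound $\log(1-t)\ge -t/(1-t)$, followed by an integral comparison for the decreasing integrand, followed by the explicit antiderivative. The only organizational difference is that the paper first bounds $\prod_{i=1}^{N}(1-xy^{i})$ by $\int_{0}^{N}$ and then applies the index shift $\prod_{i=0}^{N}(1-xy^{i})=\prod_{i=1}^{N+1}(1-(x/y)y^{i})$, whereas you integrate over $[-1,N+1]$ directly; these are equivalent, and your explicit identification of the constraint $x<y$ (which the paper leaves implicit but needs for the same reason) is a welcome clarification.
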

\begin{proof}
Similar to the proof of the previous inequality
\begin{align*}
\prod_{i=1}^{N}(1-xy^{i}) & =\exp\left(\sum_{i=1}^{N}\log\left(1-xy^{i}\right)\right)\\
 & \geq\exp\left(\sum_{i=1}^{N}-\frac{xy^{i}}{1-xy^{i}}\right)\\
 & \geq\exp\left(-\int_{0}^{N}\frac{xy^{i}}{1-xy^{i}}di\right)\\
 & \geq\exp\left(-\frac{\log(1-x)-\log\left(1-xy^{N}\right)}{\log(y)}\right).
\end{align*}
Thus,
\begin{align*}
\prod_{i=0}^{N}(1-xy^{i}) & =\prod_{i=1}^{N+1}(1-(x/y)y^{i})\\
 & \geq\exp\left(-\frac{\log(1-x/y)-\log(1-xy^{N+1})}{\log(y)}\right),
\end{align*}
as required.
\end{proof}

\begin{lemma} \label{Fact:Tedious_Bound} Let
  $k>0$, $\mu>0$, and $\epsilon>0$. Then for $y\in(0,1)$ and $x\in(0,1]$,
\[
\inf_{\theta>0}\left\{
  \exp(-\theta\epsilon\nu)\prod_{i=0}^{N-1}\left(1-\theta
    xy^{i}\right)^{-k}\right\} \leq
\left(\frac{\exp(1)}{\alpha}\cdot \frac{\epsilon\nu}{x}\right)^{\alpha}\exp\left(-\frac{\epsilon\nu}{x}\right),
\]
where $\alpha=\frac{1}{k}\left(\frac{1}{\log(1/y)}+1\right)$.
\end{lemma}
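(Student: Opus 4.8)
The plan is to peel the infimum over the parameter $\theta$ down to a one-variable calculus exercise, by first bounding the finite product $\prod_{i=0}^{N-1}(1-\theta xy^{i})$ from below by a single power of $1-\theta x$. First I would note that the bracketed expression is finite only for $\theta\in(0,1/x)$, since for $\theta\ge 1/x$ the $i=0$ factor $1-\theta x$ is nonpositive; so the infimum may be taken over $\theta\in(0,1/x)$. I also work under the hypothesis $\epsilon\nu\ge\alpha x$, which is exactly the regime in which this lemma is invoked (see~\eqref{eq:23} and~\eqref{eq:25}); this is what makes the minimizing $\theta$ found below positive. For $\theta\in(0,1/x)$ every factor $1-\theta xy^{i}$ lies in $(0,1]$, so discarding the tail $i\ge N$ can only increase the product, giving $\prod_{i=0}^{N-1}(1-\theta xy^{i})\ge\prod_{i=0}^{\infty}(1-\theta xy^{i})$.

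Next I would apply Lemma~\ref{fact:Q-Pochhammer_Lower_Bound}, with its two variables taken to be $\theta x\in[0,1)$ and $y$, to get $\prod_{i=0}^{\infty}(1-\theta xy^{i})\ge(1-\theta x)^{1-1/\log y}$; since $1-1/\log y=1+1/\log(1/y)$, raising to the power $-k$ and substituting into the objective bounds it, for every $\theta\in(0,1/x)$, by $\exp(-\theta\epsilon\nu)\,(1-\theta x)^{-\alpha}$, where $\alpha$ is the exponent recorded in the statement. It then remains only to minimize the scalar function $g(\theta):=\exp(-\theta\epsilon\nu)(1-\theta x)^{-\alpha}$ over $\theta\in(0,1/x)$.

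For this last step I would pass to $\log g(\theta)=-\theta\epsilon\nu-\alpha\log(1-\theta x)$, which is convex on $(0,1/x)$, set its derivative $-\epsilon\nu+\alpha x/(1-\theta x)$ to zero, and solve, obtaining the minimizer $\theta^{*}=1/x-\alpha/(\epsilon\nu)$; this lies in $(0,1/x)$ precisely because $\epsilon\nu\ge\alpha x$. Substituting $1-\theta^{*}x=\alpha x/(\epsilon\nu)$ gives $(1-\theta^{*}x)^{-\alpha}=(\epsilon\nu/\alpha x)^{\alpha}$ and $\exp(-\theta^{*}\epsilon\nu)=\exp(\alpha-\epsilon\nu/x)$, and multiplying these yields $\bigl(\tfrac{\exp(1)}{\alpha}\cdot\tfrac{\epsilon\nu}{x}\bigr)^{\alpha}\exp(-\epsilon\nu/x)$, the claimed bound. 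None of the computation is hard; the only places that require care are (i) getting the exponent on $1-\theta x$ correct after the $q$-Pochhammer reduction, so that it agrees with the $\alpha$ in the statement, and (ii) keeping track of the admissible interval $\theta\in(0,1/x)$ together with the threshold $\epsilon\nu\ge\alpha x$ that renders the minimizing $\theta$ positive.
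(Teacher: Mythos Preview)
Your proposal is correct and follows essentially the same route as the paper: extend the finite product to the infinite $q$-Pochhammer product, invoke Lemma~\ref{fact:Q-Pochhammer_Lower_Bound} to collapse it to a single power $(1-\theta x)^{-\alpha}$, and then minimize the resulting one-variable expression via $\theta^{*}=1/x-\alpha/(\epsilon\nu)$ under the threshold $\epsilon\nu\ge\alpha x$. The only cosmetic addition is that you verify convexity of $\log g(\theta)$ to certify the minimizer, which the paper leaves implicit.
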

\begin{proof}
  By inverting both sides of~\eqref{eq:11} we obtain the following
  inequality
\begin{equation} \label{eq:Q--LB}
  \prod_{i=0}^{\infty}(1-xy^{i})^{-k}
  \leq
  \exp\left(-\log(1-x)\left[\frac{1}{\log(1/y)}+1\right]\right).
\end{equation}
Therefore, for $\epsilon\geq\alpha x/v$,
\begin{align*}
  &\hspace*{-.5in}\inf_{\theta>0} \left\{
    \exp(-\theta\epsilon\nu)\prod_{i=0}^{N-1}(1-\theta xy^{i})^{-k}
  \right\}
  \\
  &\leq\inf_{\theta>0} \left\{
    \exp(-\theta\epsilon\nu)\prod_{i=0}^{\infty}(1-\theta xy^{i})^{-k}
  \right\}
  \\
  &\overset{(i)}{\leq}\inf_{\theta>0} \left\{ \exp \left( -\frac{1}{k}
      \left[ \frac{1}{\log(1/y)}+1 \right]\log \left( 1-\theta x
      \right) -\theta v\epsilon \right) \right\}
  \\
  &=\inf_{\theta>0} \left\{ \exp \left( -\alpha\log \left( 1-\theta x
      \right) -\theta\epsilon\nu \right) \right\}
  \\
  &\overset{(ii)}{=}\exp \left( -\alpha\log
    \left(1- \left( \frac{1}{x}-\frac{\alpha}{v\epsilon} \right)x
    \right)- \left( \frac{1}{x}-\frac{\alpha}{v\epsilon} \right)
    v\epsilon \right)
  \\
  &= \left(\frac{\exp(1)}{\alpha}\cdot
    \frac{\epsilon\nu}{x}\right)^{\alpha}\exp\left(-\frac{\epsilon\nu}{x}\right),
\end{align*}
where $(i)$ follows from~\eqref{eq:Q--LB}; and $(ii)$ uses the
substitution $\theta=1/x-\alpha/v\epsilon$, which can be shown to be
the optimal choice of $\theta$. Because $\theta>0$, $\epsilon>\alpha
x/v$.
\end{proof}

For the remainder of this section, define the sample average 
\[
S_{m} := \frac{1}{m}\sum_{i}^{m}X_{i}
\]
for a sequence of random variables $\{X_{1},\ldots,X_{m}\}$.
\begin{theorem}[Hoeffding~{\cite[Theorem~2]{Hoeffding:1963}}]
  \label{thm:Hoeffding Bound} Consider independent random variables
  $\{X_{1},\ldots,X_{m}\}$, $X_{i}:\Omega\rightarrow\Re$. If the
  random variables are bounded, i.e.,
  \[
  d := \sup_{\omega\in\Omega}X_{i}(\omega) - \inf_{\omega\in\Omega}X_{i}(\omega)
  \] is finite, then
  \[
  \Pr\left(S_{m} - \Ex S_{m}\geq\epsilon\right)
  \leq\exp\big({-\epsilon^{2}/\eta_{m}}\big),
  \textt{where}
  \eta_{m} = d^{2}/({2m}).
  \]
\end{theorem}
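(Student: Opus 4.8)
The plan is to run the classical Cram\'er--Chernoff (exponential Markov) argument, whose three ingredients are an exponential tail bound, a factorization over the independent summands, and a one--variable moment--generating--function estimate (Hoeffding's lemma). For any $s>0$, Markov's inequality applied to the nonnegative random variable $\exp\bigl(s(S_m-\Ex S_m)\bigr)$ gives $\Pr(S_m-\Ex S_m\ge\epsilon)\le e^{-s\epsilon}\,\Ex\exp\bigl(s(S_m-\Ex S_m)\bigr)$. Writing $S_m-\Ex S_m=\tfrac1m\sum_{i=1}^m(X_i-\Ex X_i)$ and using independence, the moment--generating function factors as
\[
\Ex\exp\bigl(s(S_m-\Ex S_m)\bigr)=\prod_{i=1}^m\Ex\exp\Bigl(\tfrac{s}{m}\,(X_i-\Ex X_i)\Bigr),
\]
which reduces the whole problem to bounding a single factor.

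\textbf{The crux (Hoeffding's lemma).} The one fact that does real work is: if $Y$ is centered, $\Ex Y=0$, and takes values in an interval of length at most $d$, then $\Ex e^{tY}\le\exp(t^2d^2/8)$ for every $t\in\Real$. I would prove this from convexity of $\exp$: for $Y\in[a,b]$ with $a\le 0\le b$ and $b-a\le d$, the pointwise inequality $e^{ty}\le\frac{b-y}{b-a}e^{ta}+\frac{y-a}{b-a}e^{tb}$ on $[a,b]$, combined with $\Ex Y=0$, yields $\Ex e^{tY}\le e^{\phi(h)}$ where $h=t(b-a)$, $\phi(h)=-ph+\log(1-p+pe^h)$, and $p=-a/(b-a)\in[0,1]$. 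A short computation gives $\phi(0)=\phi'(0)=0$ and $\phi''(h)=q(1-q)\le\tfrac14$ with $q=pe^h/(1-p+pe^h)\in[0,1]$, so a second--order Taylor expansion with remainder gives $\phi(h)\le h^2/8\le t^2d^2/8$, which is the claim. This estimate is the only place where the argument is not purely mechanical; everything around it is bookkeeping.

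\textbf{Assembling and optimizing.} Applying the lemma with $t=s/m$ to each of the $m$ factors above gives $\Ex\exp\bigl(s(S_m-\Ex S_m)\bigr)\le\exp\bigl(s^2d^2/(8m)\bigr)$, hence
\[
\Pr(S_m-\Ex S_m\ge\epsilon)\le\exp\Bigl(-s\epsilon+\frac{s^2d^2}{8m}\Bigr),\qquad s>0.
\]
The exponent is a convex quadratic in $s$, minimized at $s^\star=4m\epsilon/d^2>0$; substituting $s=s^\star$ makes it equal to $-2m\epsilon^2/d^2=-\epsilon^2/\eta_m$ with $\eta_m=d^2/(2m)$, which is exactly the stated bound. (Since this is classical --- it is Theorem~2 of Hoeffding's 1963 paper --- one may alternatively just cite it; the sketch above is self-contained.) The only genuine obstacle is Hoeffding's lemma; the exponential Markov step, the factorization, and the optimization over $s$ are all routine once it is in hand.
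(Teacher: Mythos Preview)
Your proof is correct and is the standard Cram\'er--Chernoff argument for Hoeffding's inequality. The paper, however, does not give a proof at all: the theorem is stated in the appendix of auxiliary results and attributed directly to Hoeffding~[Theorem~2, 1963] with no argument supplied. So there is nothing to compare against; your self-contained derivation via Hoeffding's lemma is simply more than the paper provides, and your parenthetical remark that one may alternatively just cite it is exactly what the authors did.
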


\begin{theorem}[Serfling~{\cite[Corollary~1.1]{Serfling:1974}}]
  \label{thm:Serfling Bound}
  Let $x_1,\ldots,x_{M}$ be a population, $\{X_{1},\ldots,X_{m}\}$ be
  samples drawn without replacement from the population, and let $ d
  := \max_i x_{i} - \min_i x_{i}.  $ Then
  \[
  \Pr\left(S_{m}-\Ex S_{m} \geq\epsilon\right)
  \leq\exp\big({-\epsilon^{2}/\eta_m}\big),
  \textt{where}
  \eta_{m} = \frac{d^{2}}{2m}\left(1 - \frac{m-1}{M} \right).
  \]
\end{theorem}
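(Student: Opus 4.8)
The plan is to establish this as Serfling's sampling-without-replacement concentration bound via a Doob-martingale argument feeding into a Markov/Chernoff step. I would first dispose of the shortcut that does \emph{not} suffice: Hoeffding's convex-ordering reduction $\Ex\phi(\sum_{i}X_i)\le\Ex\phi(\sum_{i}Y_i)$ for convex $\phi$ (with $Y_i$ sampled \emph{with} replacement), followed by the i.i.d.\ bound of Theorem~\ref{thm:Hoeffding Bound}, only recovers the coarser variance proxy $d^2/(2m)$. The finite-population factor $1-(m-1)/M$ has to come from the martingale structure itself, and that is where I expect the real work to lie.

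First I would shift the population so that $x_i\in[0,d]$ for all $i$, and write $\mu=\tfrac1M\sum_i x_i$, so that $\Ex S_m=\mu$. Next, realize the sample as the first $m$ entries of a uniformly random permutation of the population and form the Doob martingale $D_j:=\Ex[S_m\mid X_1,\dots,X_j]$, $j=0,\dots,m$, so $D_0=\mu$ and $D_m=S_m$. Using that $\Ex[X_{j'}\mid X_1,\dots,X_j]$ equals the mean $\bar P_j$ of the $M-j$ not-yet-drawn values for every $j'>j$, a short computation yields the increment formula
\[
  D_j-D_{j-1}=\frac{M-m}{m(M-j)}\bigl(X_j-\bar P_{j-1}\bigr).
\]
Conditionally on $X_1,\dots,X_{j-1}$ the variable $X_j-\bar P_{j-1}$ is centred and supported in an interval of width at most $d$, so Hoeffding's lemma bounds the conditional moment-generating function of the $j$th increment by $\exp\!\bigl(\theta^2(M-m)^2d^2/(8m^2(M-j)^2)\bigr)$. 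Iterating the conditioning then gives $\Ex\exp(\theta(S_m-\mu))\le\exp\!\bigl(\tfrac{\theta^2(M-m)^2d^2}{8m^2}\,S\bigr)$ with $S:=\sum_{j=1}^m(M-j)^{-2}$, and minimizing the Markov bound over $\theta>0$ produces the asserted Gaussian-type tail with variance proxy $\tilde\eta_m=\tfrac{(M-m)^2d^2}{2m^2}\,S$.

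The delicate part---and the step I expect to be the main obstacle---is matching constants: I need $\tilde\eta_m\le\eta_m$, which reduces to the tight elementary inequality
\[
  \sum_{j=1}^{m}\frac{1}{(M-j)^2}\;\le\;\frac{m\,(M-m+1)}{M\,(M-m)^2}\qquad(1\le m<M).
\]
This is an equality at $m=1$ and is nearly tight as $m\to M$, so the crude telescoping estimate $(M-j)^{-2}\le(M-j-1)^{-1}-(M-j)^{-1}$ loses too much. I would instead prove it by induction on $m$ with $M$ held fixed---where the inductive step collapses to a polynomial inequality in $M$ and $m$---or by a careful integral comparison that keeps the $M-j+1$ correction. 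Once this lemma is available the theorem follows; a cleaner alternative is to cite Serfling's original derivation, which tracks the finite-population constant directly and bypasses this estimate.
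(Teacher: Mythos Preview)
The paper does not prove this statement at all: it is quoted verbatim as an auxiliary result from Serfling's 1974 paper (cited as \cite[Corollary~1.1]{Serfling:1974}) and used as a black box in the proof of Theorem~\ref{thm:sample average bounds}. There is no proof to compare against---the paper's ``proof'' is the citation.

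That said, your sketch is a legitimate and essentially correct route to Serfling's inequality. Your increment formula for the forward Doob martingale is right, and feeding Hoeffding's lemma through the tower property does produce a sub-Gaussian bound with variance proxy $\tilde\eta_m=\tfrac{(M-m)^2d^2}{2m^2}\sum_{j=1}^m(M-j)^{-2}$. The final inequality you isolate, $\sum_{j=1}^m(M-j)^{-2}\le m(M-m+1)/\bigl(M(M-m)^2\bigr)$, does hold (equality at $m=1$, strict for $1<m<M$), and your plan to prove it by induction on $m$ is workable. It is worth knowing, however, that Serfling's own argument avoids this step by running a \emph{reverse} martingale: he works with the normalised remaining sum and obtains increments whose squared coefficients telescope directly to the factor $(1-(m-1)/M)/m$, so no auxiliary combinatorial inequality is needed. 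Your forward-martingale approach is more elementary to set up but pays for it with that last estimate; Serfling's reverse martingale is slicker at the cost of a less obvious filtration.
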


Because $\eta_m$ is strictly decreasing in $m$, the Serfling bound is
uniformly better than the Hoeffding bound. Note that the Serfling
bound is not tight: in particular, when $M=m$ (i.e., $S_{m}=\Ex S_{m}$), the
bound is not zero (except for degenerate population).

\end{appendices}
\section*{Acknowledgements}
We are grateful to our colleagues Bill Aiello and Ting Kei Pong for
invaluable comments. We are also indebted to Jason Swanson for
generously answering our questions regarding conditional expectations,
and for his excellent lecture
notes~\cite{Swanson:2013:cond-layer-cake} on the topic.

\bibliography{master,friedlander}

\end{document}